\newcommand{\R}{\mathbb{R}}
\renewcommand{\phi}{\varphi}
\newcommand{\eps}{\varepsilon}
\renewcommand{\d}{\mathop{}\!\mathrm{d}}
\DeclareMathOperator{\supp}{supp}
\renewcommand\div{\operatorname{div}}
\theoremstyle{plain}
\newtheorem{theorem}{Theorem}
\newtheorem{proposition}[theorem]{Proposition}
\newtheorem{lemma}[theorem]{Lemma}
\theoremstyle{definition}
\newtheorem{definition}{Definition}
\theoremstyle{remark}
\pgfplotsset{every tick label/.append style={font=\tiny}}
\begin{document}

\title{An inverse problem involving a viscous Eikonal equation with applications in electrophysiology\thanks{The authors were supported by the ERC advanced grant 668998 (OCLOC) under the EU’s H2020 research program.}}


\author{Karl Kunisch \thanks{Institute for Mathematics and Scientific Computing, University of Graz, Heinrichstrasse 36, A-8010 Graz, Austria, and Radon Institute, Austrian Academy of Science, (karl.kunisch@uni-graz.at).}
    \and Philip Trautmann \thanks{Institute for Mathematics and Scientific Computing, University of Graz, Heinrichstrasse 36, A-8010 Graz, Austria,   (philip.trautmann@uni-graz.at).}}


\maketitle

\begin{abstract}
In this work we discuss the reconstruction of cardiac activation instants based on a viscous Eikonal equation from boundary observations. The problem is formulated as an least squares problem and solved by a projected version of the Levenberg Marquardt method. Moreover, we analyze the wellposeness of the state equation and derive the gradient of the least squares functional with respect to the activation instants. In the numerical examples we also conduct an experiment in which the location of the activation sites and the activation instants are reconstructed jointly based on an adapted version of the shape gradient method from \cite{kunisch2019inverse}. We are able to reconstruct the activation instants as well as the locations of the activations with high accuracy relative to the noise level.
\end{abstract}

\section{Introduction}

This work is concerned with an inverse problem in cardiac electrophysiology. In particular, the activation instants of the excitation wave in the myocardium are estimated from the arrival times of the wave at the epicardium. To briefly explain the problem we recall that  the electro-physiologic activity of the heart is often modeled using the bidomain equations, whose numerical solution is very expensive. If one is only interested in the activation times $T$ of the tissue, the bidomain model can be reduced to the simpler viscous Eikonal equation given, for instance, in the form
\begin{equation}\label{state_eq_eps_intro}
\left\{
\begin{aligned}
-\eps\operatorname{div}(M\nabla T)+\sqrt{M\nabla T\cdot \nabla T}&=1&\text{in}~\Omega,\\
T&=u_i&\text{on}~\Gamma_i,\quad i=1,\ldots,n\\
\eps \nabla T\cdot n&=0&\text{on}~\Gamma_N.
\end{aligned}\right.
\end{equation}
The domain $\Omega$ models the computational geometry of the heart. The epicardium of the heart is denoted by $\Gamma_N$ and the boundaries of the activation regions (activation sites) by $\Gamma_i$. The matrix $M$ describes the fiber orientation of the heart tissue and the values $u_i\in \mathbb R $ are the activation instants in the activation regions. On the basis of this model we formulate the inverse problem in the following form
\begin{equation}\label{inverse_prob_eps_intro}
\min_{u}J(u):=\frac 1 2\int_{\Gamma_N}(T(u)-z)^2~\mathrm d x\quad \text{subject to}~\eqref{state_eq_eps_intro},
\end{equation}
where $z$ is the measured data on the epicardium. Problem \eqref{inverse_prob_eps_intro} constitutes an inverse problem for the activation instants $u_i$. 
While in the analysis part we focus  on reconstructing the activation instants from measurements of the activation time $T$ on the surface of the computational domain $\Omega$, in the numerical section we demonstrate that the activation instants and
the location of the activation sites can be reconstructed simultaneously. 

To briefly  comment on the physiological background of this research, we point out that
computational models of cardiac function are increasingly considered as a clinical research tool.
For the understanding of the driving mechanism of cardiac electro-mechano-fluidic function,
the sequence of electrical activations is of key importance.
Computer models intended for clinical applications
must be parameterized in a patient-specific manner
to approximate the electrical activation sequence in a given patient's heart, which necessitates to solving inverse problems to identify patient specific parameters.
Anatomical \cite{demoulin72:_histo,ono09:_morphological} as well as early experimental mapping studies \cite{durrer70:_excitation_human},
using \emph{ex vivo} human hearts
provided evidence that electrical activation in the left ventricle (LV),
i.e.\ the main pumping chamber that drives blood into the circulatory system,
is initiated by the His-Purkinje system \cite{haissaguerre16:_His_purkinje}
at several specific sites of earliest activation (root points)
which are located at the endocardial (inner) surface of the LV.
In a first approximation it can be assumed that the healthy human LV is activated
at these root points
by a tri-fascicular conduction system \cite{rosenbaum69:_trifascicular}
consisting of three major fascicles referred to as anterior, septal and posterior fascicle.
Size and location of these patches
as well as the corresponding instants of their activation are key determinants shaping the activation sequence of the left ventricle.
Since the His-Purkinje system is highly variable in humans,
there is significant interest in inverse methods for identifying these sites and activation instants, ideally non-invasively.

 To briefly outline the paper, first we give a sufficient condition for the  well-posedness of the elliptic PDE using the Schauder fixed point theorem and the maximum principle. The activation instants enter the state equation as constant Dirichlet boundary conditions on the surface of the activation regions. Then we calculate the gradient of the least squares cost functional with respect to these activation instants. It can be expressed in terms of the normal derivative of the solution to the adjoint state equation on the surface of activation sites. Therefore we also analyze the well-posedness of the adjoint and linearized state equations. Finally, we propose to solve the least squares problem using the projected Levenberg Marquardt method.\\
In our numerical experiments we first consider only the reconstruction of the activation instants using the  proposed Levenberg Marquardt method. In the second numerical example we perform the joint reconstruction of the activation sites and the activation instants. The activation sites are reconstructed by means of an adapted version of the shape gradient method introduced in \cite{kunisch2019inverse} together with a projected gradient method for the reconstruction of the activation instants. The numerical examples illustrate the feasibility of the approach and are  carried out on the 2D unit square with artificial data.

\section{Problem statement}
Let $U\subset\R^d$, with $d=2$ or $d=3$ be an open domain and $\Gamma_N=\partial U$ its boundary. In the physiological context it represents the cardiac domain. Within $U$ we consider a family of open subdomains $\{\omega_i\}_{i=1}^n$ and we set  $\Gamma_i=\partial\omega_i$. These boundaries constitutes the surface from where the activation spreads.
 Then we define $\Omega=U\setminus \bigcup_{i=1}^n\omega_i$ which is our  mathematical and computational cardiac domain, with boundary $\partial\Omega=\Gamma_N\cup \bigcup_{i=1}^n\Gamma_i$. Note that $\Omega$ is connected but not simply connected.
  Let us choose a parameter  $\eps>0$, and fix  $z\in H^{1/2}(\Gamma_N)$, which represents the epicardial input data.

   With these specifications we consider the following problem:
\begin{equation}\label{inverse_prob_eps}
\min_{u\in U_{ad}}J(u)=\frac 1 2\int_{\Gamma_N}(T(x)-z(x))^2\d x
\end{equation}
subject to the viscous Eikonal equation
\begin{equation}\label{state_eq_eps}
\left\{
\begin{aligned}
-\eps\div(M\nabla T)+\sqrt{\beta+|\nabla T|_M^2}&=1&\text{in}~\Omega\\
T&=u_i&\text{on}~\Gamma_i,\quad i=1,\ldots,n\\
\eps M\nabla T\cdot n&=0&\text{on}~\Gamma_N
\end{aligned}\right.
\end{equation}
where $\beta\in[0,1]$,  $n$ is the unit normal on $\Gamma_N$, and $|\nabla T|_M^2={ M\nabla T\cdot \nabla T}$.
Further $u={\text{col}}(u_1,\dots, u_n)\in U_{ad}$ which is a closed and convex set in $\R^n$. The function $T$ stands for the activation time, and the matrix $M$ models the cardiac conduction velocity.

For the mathematical description of the excitation process in the myocardium Eikonal equations are a well-established procedure. Notably we refer to \cite[Section 5]{colli90} where, on the basis of the bidomain equations,  a singular perturbation technique with respect to  the thickness of the myocardial wall and the time taken by the excitation wave front to cross the heart wall is carried out to arrive at various models for the Eikonal equation which differ by the nonlinear term. The two versions which are advocated in that paper  and for which numerical comparisons are carried out are  $|\nabla T|_M^2$ and $\sqrt{|\nabla T|_M^2}$. It is stated there that the model involving $\sqrt{|\nabla T|_M^2}$ is better for wavefront propagation and collision. In earlier work \cite{kunisch2019inverse} we have used $|\nabla T|_M^2$ and solved the inverse shape  problem of identifying the centers of spherical subdomains $\omega_i$ from epicardial data $z$.

\section{Well posedness of the viscous Eikonal equation}
We assume that the boundaries of $\Omega$ are chosen such that the equation
\begin{equation}\label{poisson}
\left\{
\begin{aligned}
-\eps\div(M\nabla T_H)&=\tilde f&\text{in}~\Omega\\
T_H&=0&\text{on}~\Gamma_i,\quad i=1,\ldots,n\\
\eps M\nabla T_H\cdot n &=0&\text{on}~\Gamma_N.
\end{aligned}\right.
\end{equation}
has a unique solution $T_H\in H^2(\Omega)$ for any $\tilde f\in L^2(\Omega)$. Moreover we assume that $M\in W^{1,\infty}(\Omega)^{d\times d}$ and that $M(x)v\cdot v\geq \alpha |v|^2$ for a.e. $x\in \Omega$ holds. Further, for any $u\in\R^n$ we assume the existence of $g\in W^{2,6}(\Omega)$ with $g|_{\Gamma_i}=u_i$ for $i=1,\ldots,n$, $g$  vanishing in a neighbourhood of $\Gamma_N$,  and $\|g\|_{W^{2,6}(\Omega)}\leq c|u|_{\R^n}$, with $c$ independent of $u$. For example $g=\sum_{i=1}^nu_ig_i$ can be chosen where the functions $g_i$ are chosen as smooth bump functions which are  equal to $1$ on $\bar \omega_i$, vanish near $\Gamma_N$ and have the property $\supp(g_i)\cap \supp(g_j)=\emptyset$ for all $i,j=1,\ldots,n$. Moreover for $\tilde T:=T_H+g\in H^2(\Omega)$,  we have $\tilde T|_{\Gamma_i}=u_i$ for all $i=1,\ldots,n$ and
\[
\eps \int_\Omega M\nabla T_H\cdot \nabla v~\mathrm dx=\int_\Omega \tilde fv+ \eps \div(M\nabla g)v~\mathrm dx
\]
for all $v\in V:=  H^1_0(\Omega\cup\Gamma_N)=\{v\in H^1(\Omega)|\quad v|_{\Gamma_i}=0,~i=1,\ldots,n\}$. In the subsequent developments \eqref{poisson} will be used with $\tilde f$ replaced by \[-\sqrt{\beta+|\nabla (T_H +g)|_M^2}+ \varepsilon\div(M\nabla g)+1\].
\begin{theorem}\label{theo:existence}
For $\eps>0$ sufficiently large \eqref{state_eq_eps} has a unique solution
\[T\in W^{2,6}(\Omega).\]
Moreover there exists a constant $c$, independent of $u\in \R^{n}$, and $\beta\in[0,1]$ such that $\|T\|_{W^{2,6}} \le c(1+ |u|)$.
\end{theorem}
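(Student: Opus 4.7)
The natural plan is to apply Schauder's fixed point theorem after reducing to homogeneous Dirichlet data via the lifting $T = T_H + g$ suggested in the setup. With this substitution, the state equation becomes equivalent to
\[
-\eps\div(M\nabla T_H) = 1 + \eps\div(M\nabla g) - \sqrt{\beta + |\nabla(T_H+g)|_M^2}
\]
with $T_H \in V$. For $R>0$ to be chosen, I would set $B_R = \{w \in W^{2,6}(\Omega)\cap V : \|w\|_{W^{2,6}} \le R\}$ and define $F: B_R \to W^{2,6}(\Omega)\cap V$ by $F(w) = T_H$, where $T_H$ solves the linear problem \eqref{poisson} with right-hand side $\tilde f(w) = 1 + \eps\div(M\nabla g) - \sqrt{\beta + |\nabla(w+g)|_M^2}$. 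The pointwise bound $\sqrt{\beta + s^2}\le \sqrt{\beta} + |s|$ together with $W^{2,6} \hookrightarrow W^{1,6}$ places $\tilde f(w)$ in $L^6(\Omega)$, so $F$ is well defined.

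The first step would be the self-map property. Dividing \eqref{poisson} by $\eps$, the assumed $W^{2,6}$-regularity of the linear problem and the bound $\|g\|_{W^{2,6}}\le c|u|$ should yield
\[
\|F(w)\|_{W^{2,6}} \le \frac{C_1}{\eps}(1 + R + |u|) + C_2|u|,
\]
with all constants independent of $u$ and of $\beta \in [0,1]$ (because $\sqrt\beta \le 1$ and the Lipschitz constant of $s\mapsto\sqrt{\beta+s^2}$ is $1$ uniformly in $\beta$). Taking $\eps$ large enough that $C_1/\eps \le \tfrac 12$, the choice $R = c(1+|u|)$ with $c$ sufficiently large forces $F(B_R)\subset B_R$, and together with $T = T_H + g$ gives the asserted a priori estimate.

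For continuity and compactness, I would exploit that $w \mapsto \sqrt{\beta + |\nabla(w+g)|_M^2}$ is Lipschitz from $W^{1,6}$ to $L^6$ (via $|\sqrt{\beta+a^2} - \sqrt{\beta+b^2}| \le |a-b|$), while linear elliptic regularity is continuous $L^6 \to W^{2,6}$. Hence $F$ is continuous from the $W^{1,6}$-topology into $W^{2,6}$, and composing with the compact embedding $W^{2,6} \hookrightarrow W^{1,6}$ makes $F$ a compact continuous self-map of the convex, $W^{1,6}$-closed, $W^{1,6}$-compact set $B_R$. Schauder's theorem then produces a fixed point $T_H$, and $T = T_H + g \in W^{2,6}(\Omega)$ solves \eqref{state_eq_eps}.

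Uniqueness would come from a standard energy argument: subtracting the weak equations for two solutions $T_1, T_2$, testing with $w = T_1 - T_2 \in V$, and combining coercivity $Mv\cdot v\ge \alpha|v|^2$, the uniform Lipschitz bound on the nonlinearity, and the Poincar\'e inequality on $V$ (available because $w$ vanishes on $\bigcup_i\Gamma_i$), one obtains $(\eps\alpha - C C_P)\|\nabla w\|_{L^2}^2 \le 0$, forcing $w\equiv 0$ once $\eps$ is large enough. The main obstacle in executing this plan is to pin down a single $\eps$ threshold that simultaneously ensures the self-map property, uniqueness, and the $\beta$-uniform bound, since all three rely on balancing the $1/\eps$ gain from the leading-order term against the constants coming from the linear $W^{2,6}$-theory and the Lipschitz constant of the square-root nonlinearity; keeping the estimates $\beta$-uniform is what selects the inequality $\sqrt{\beta+s^2}\le 1+|s|$ rather than a sharper bound depending on $\beta$.
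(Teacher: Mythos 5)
Your proof is correct, but it departs from the paper's argument in two places worth noting. For existence, the paper also lifts the boundary data via $T=T_H+g$ and solves the same linear auxiliary problem, but it invokes \emph{Schaefer's} fixed point theorem on $V=H^1_0(\Omega\cup\Gamma_N)$: instead of exhibiting an invariant ball, it bounds the set $\{T_H : T_H=\lambda G(T_H),\ 0<\lambda\le 1\}$ by an energy estimate (this is where ``$\eps$ large'' enters there), obtains a fixed point in $H^2(\Omega)$, and only afterwards bootstraps to $W^{2,6}(\Omega)$ using $\nabla T\in H^1(\Omega)\hookrightarrow L^6(\Omega)$. Your Schauder argument on a $W^{2,6}$-ball delivers the a priori bound $\|T\|_{W^{2,6}}\le c(1+|u|)$ in one stroke, at the price of assuming $L^6\to W^{2,6}$ regularity of the linear solver up front; the paper only postulates $L^2\to H^2$ regularity but uses the $L^6$ version implicitly in its bootstrap, so this is not a real discrepancy. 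For uniqueness the difference is more substantive: the paper exploits convexity of $v\mapsto\sqrt{\beta+|v|_M^2}$, bounds the difference of the nonlinear terms from below by $B(x,\nabla T_2)\cdot\nabla\delta T$ with $B\in L^\infty(\Omega)^d$, and concludes $\delta T\le 0$ (and $\ge 0$ by symmetry) from the weak maximum principle — an argument valid for \emph{every} $\eps>0$ and uniformly in $\beta\in[0,1]$, including $\beta=0$. Your energy argument, based on the Lipschitz bound $|\sqrt{\beta+|v|_M^2}-\sqrt{\beta+|\bar v|_M^2}|\le|v-\bar v|_M$ (which is exactly the paper's Lemma \ref{lem:Lipschitz}) and the Poincar\'e inequality on $V$, yields uniqueness only once $\eps\alpha$ dominates the product of the Lipschitz and Poincar\'e constants; that suffices for the theorem as stated, but loses the $\eps$-independence of uniqueness that the maximum-principle route provides.
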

\begin{proof}
1.Existence: Let $T_H\in H^1_0(\Omega\cup\Gamma_N)$ be fixed. Then we set
\[f(T_H)(x):=-\sqrt{\beta+|\nabla(T_H(x)+g(x))|_M^2}\]
with $\beta\in [0,1]$. Since $T_H\in V$, $g\in W^{2,6}(\Omega)$ and $M\in W^{1,\infty}(\Omega)^{d\times d}$ it follows that $f(T_H)\in L^2(\Omega)$. Now let $w\in H^2(\Omega)$ be the unique solution of
\begin{equation}\label{poisson_aux}
\left\{
\begin{aligned}
-\eps\div(M\nabla w)&=f(T_H)+\div(M\nabla g)+1&\text{in}~\Omega\\
w&=0&\text{on}~\Gamma\\
\eps M\nabla w\cdot n &=0&\text{on}~\Gamma_N
\end{aligned}\right.
\end{equation}
with the estimate
\[
\|w\|_{H^2(\Omega)}\leq c(\|f(T_H)\|_{L^2(\Omega)}+|u|+1).
\]
Thus we can define the operator $G\colon V \rightarrow H^2(\Omega) \subset V $, $G\colon T_H\mapsto w$ which satisfies the inequality
\begin{equation}\label{bound_G}
\|G(T_H)\|_{H^2(\Omega)}\leq c(M,\eps)(\|T_H\|_{V}+|u|+1),
\end{equation}
with $c(M,\eps)$ independent of $\beta\in [0,1]$ and $T_H$. In the following we shall utilize Schaefer's fixed point theorem in order to prove that $G$ has a fixed point. At first we prove that $G\colon V\rightarrow V$ is continuous and compact. Let $\{T_{H,k}\}_k\subset V$ be a convergent  sequence with limit $T_H$ in $V$. We set $w_k:=G(T_{H,k})$ and have
\[
\sup_k\|w_k\|_{H^2(\Omega)}<\infty
\]
according to \eqref{bound_G}.
The compact embedding of $H^2(\Omega)\cap V$ in $V$ implies the existence of a subsequence $\{w_k\}$ and of a $w\in V$ with $w_k\rightarrow w$ in $V$. By taking the limit in the weak formulation of \eqref{poisson_aux} we see that $G(T_H)=w$. Thus $G\colon V\rightarrow V$ is continuous. A similar argument shows that $G\colon V \rightarrow V$ is compact. In order to apply Schaefer fixed point theorem we have to further show that the set
\[
\set{T\in V|T=\lambda G(T)~\text{for some}~0<\lambda\leq1}
\]
is bounded in V. Let $T_H\in V$ be such that $T_H=\lambda G(T_H)$ for some $0<\lambda\leq1$. Then we have
\[
-\eps\div(M\nabla T_H)=\lambda(f(T_H)+\eps \div(M\nabla g)+1)\quad\text{a.e. in }\Omega.
\]
Multiplying this equation with $T_H$ and integrating over $\Omega$, we obtain by Young's inequality and the fact that   $0<\lambda \leq 1$:
\begin{multline*}
\eps\alpha\|\nabla T_H\|_{L^2(\Omega)}^2\leq\eps\int_\Omega M\nabla T_H\cdot\nabla T_H~\mathrm dx\\
 =\lambda\int_{\Omega}(-\sqrt{\beta+|\nabla(T_H+g)|_M^2}+1+\eps \div(M\nabla g))T_H~\mathrm dx\\
 \leq \|M\|_\infty^2\|\nabla T_H\|_{L^2(\Omega)}^2+\frac 32\|T_H\|_{L^2(\Omega)}^2+ \frac{\eps^2}{2}\|\div(M\nabla g)\|_{L^2(\Omega)}^2\\
 +\|M\|_\infty^2\|\nabla g\|_{L^2(\Omega)}^2+\frac{|\Omega|}{2}(\beta+1)\\
 \leq c(M)\,(\|\nabla T_H\|_{L^2(\Omega)}^2+\varepsilon^2|u|^2+1+\beta),
\end{multline*}
with $c(M)$ independent of $\lambda$ and $\varepsilon$. Thus if $\eps$ is sufficiently large, we have $\|T_H\|_{V}\leq \tilde c(M,\eps)(1+|u|) $, for some constant $\tilde c(M,\eps)$ independent of $\lambda\in(0,1]$ and $\beta\in[0,1]$. Then  Schaefer's fixed point theorem can be applied to $G$ and yields the existence of an element $T_H\in V$ with $G(T_H)=T_H$ which is a solution of \eqref{poisson_aux}. Setting $T=T_H+g$ we have obtained a solution to  \eqref{state_eq_eps}, for which by   \eqref{bound_G} we have $|T|_{H^2(\Omega)} \le C(M,\eps)$, with $C(M,\eps)$ independent of $\beta\in [0,1]$.

Moreover, since $\nabla T\in H^1(\Omega)^d$ and thus $\nabla T\in L^6(\Omega)^d$, and since also $g\in W^{2,6}(\Omega)$ we have that
\[
-\sqrt{\beta+|\nabla T|_M^2}+1+\varepsilon\div(M\nabla g)\in L^6(\Omega),
\]
and thus
\[\|  T\|_{W^{2,6}(\Omega)}\le \tilde C(M,\eps)(1+|u|) \text { with } \tilde C(M,\eps) {\text {  independent of  }} \beta\in [0,1].
\]\\

2.Uniqueness: Let $T_i\in W^{2,6}(\Omega)$, $i=1,2$ be two solutions of \eqref{state_eq_eps} and define $\delta T=T_1-T_2$. Then $\delta T$ satisfies the equation
\begin{equation}\label{state_diff}
\left\{
\begin{aligned}
-\eps\div(M\nabla \delta T)+\sqrt{\beta+|\nabla T_1|_M^2} - \sqrt{\beta+|\nabla T_2|_M^2}&=0&\text{in}~\Omega\\
\delta T&=0&\text{on}~\Gamma\\
\eps\nabla \delta T\cdot n&=0&\text{on}~\Gamma_N.
\end{aligned}\right.
\end{equation}
Let us define for $(x,v) \in \Omega \times \R^d$ the function
\[
B(x,v):=
\begin{cases}
\frac{M(x)v}{\sqrt{\beta+|v|_{M(x)}^2}}&v\neq 0\\
0&v=0
\end{cases}
\]
It is easy to see, that
\[
B(x,\bar v)\cdot(v-\bar v)\leq \sqrt{\beta+|v|_M^2}-\sqrt{\beta+|\bar v|_M^2}
\]
holds. Indeed, in case $\beta +|\bar v|_M^2 =0$ the inequality is correct by the definition of $B$.  Otherwise we have
\begin{multline*}
\frac{M\bar v\cdot(v-\bar v)}{\sqrt{\beta+|\bar v|_{M}^2}}
=\frac{M\bar v\cdot v+\beta}{\sqrt{\beta+|\bar v|_M^2}}-\frac{\beta+|\bar v|_M^2}{\sqrt{\beta+|\bar v|_M^2}}\\
\leq \frac{\sqrt{\beta+|v|_M^2}\sqrt{\beta+|\bar v|_M^2}}{\sqrt{\beta+|\bar v|_M^2}}-\sqrt{\beta+|\bar v|_M^2}
=\sqrt{\beta+|v|_M^2}-\sqrt{\beta+|\bar v|_M^2}.
\end{multline*}
Here we have used that $\left(
                         \begin{array}{cc}
                           M & 0 \\
                           0 & 1 \\
                         \end{array}
                       \right)$
defines a scalar product for the vectors $(v,\sqrt{\beta})$.
Alternatively we can note that $B(x,v)$ is an element of the subdifferential of the convex function $v\to \sqrt{\beta + |v|_M^2}$. Thus we have
\[
B(x,\nabla T_2)\cdot\nabla\delta T\leq \sqrt{\beta+|\nabla T_1|_M^2}-\sqrt{\beta+|\nabla T_2|_M^2}.
\]
Consequently
\[
-\eps\div(M\nabla \delta T)+B(x,\nabla T_2)\cdot \nabla\delta T\leq 0,
\]
where $B(x,\nabla T_2)\in L^\infty(\Omega)^d$, since $T_2$ is an element of $W^{2,6}(\Omega)$.
Then the maximum principle implies that $\delta T \le 0$ in $\Omega$, see \cite[Theorem 3.27]{Troianiello87}. Exchanging the roles of $T_1$ and  $T_2$ in the above argument leads to $\delta T \ge 0$ in $\Omega$, and consequently to $\delta T=0$, which implies the desired uniqueness.
\end{proof}
This proof is inspired from \cite[Section 9.2, Theorem 5]{evans1998partial}.
Henceforth it will be assumed that $\eps$ is large enough so that the solution to \eqref{state_eq_eps} according to Theorem \ref{theo:existence} exists.
\begin{theorem}
We have
\[
T_\beta\to T_0\quad \text{in}~ H^2(\Omega),
\]
where $T_\beta$ denotes the solution to \eqref{state_eq_eps} as a function of $\beta$.
\end{theorem}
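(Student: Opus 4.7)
The starting point is the uniform bound of Theorem~\ref{theo:existence}: $\|T_\beta\|_{W^{2,6}(\Omega)}\le \tilde C(M,\eps)(1+|u|)$ with the constant independent of $\beta\in[0,1]$. In particular the families $\{T_\beta\}_{\beta\in[0,1]}$ and $\{T_{H,\beta}\}_{\beta\in[0,1]}$, where $T_{H,\beta}:=T_\beta-g\in V$, are bounded in $H^2(\Omega)$. My plan is to combine this weak compactness with the continuity of the inverse of the linear problem \eqref{poisson} to upgrade weak to strong $H^2$-convergence along arbitrary subsequences, and to pin down the limit as $T_0$ by the uniqueness part of Theorem~\ref{theo:existence}.

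Fix any sequence $\beta_n\to 0^+$. By reflexivity, extract a (not relabelled) subsequence with $T_{H,\beta_n}\rightharpoonup T_H^\ast$ in $H^2(\Omega)$, and set $T^\ast:=T_H^\ast+g$. The compact embedding $H^2(\Omega)\hookrightarrow H^1(\Omega)$ gives $\nabla T_{\beta_n}\to \nabla T^\ast$ in $L^2(\Omega)^d$. Using the elementary inequality $0\le\sqrt{\beta+s^2}-s\le\sqrt{\beta}$ for $s\ge 0$ together with the reverse triangle inequality for the (equivalent) norm $|\cdot|_M$, one readily obtains
\[
\bigl\|\sqrt{\beta_n+|\nabla T_{\beta_n}|_M^2}-|\nabla T^\ast|_M\bigr\|_{L^2(\Omega)}\le |\Omega|^{1/2}\sqrt{\beta_n}+\|M\|_\infty^{1/2}\,\|\nabla T_{\beta_n}-\nabla T^\ast\|_{L^2(\Omega)},
\]
whose right-hand side tends to $0$ as $n\to\infty$.

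Now rewrite the Eikonal equation for $T_{H,\beta_n}$ in the form of \eqref{poisson} with right-hand side
\[
F_{\beta_n}:=1-\sqrt{\beta_n+|\nabla T_{\beta_n}|_M^2}+\eps\,\div(M\nabla g),
\]
which by the preceding step converges in $L^2(\Omega)$ to $F^\ast:=1-|\nabla T^\ast|_M+\eps\,\div(M\nabla g)$. The standing assumption that \eqref{poisson} defines a bijection $\tilde f\mapsto T_H$ from $L^2(\Omega)$ onto $V\cap H^2(\Omega)$, together with the open mapping theorem, makes its inverse continuous; applying this inverse upgrades the weak convergence $T_{H,\beta_n}\rightharpoonup T_H^\ast$ to strong convergence in $H^2(\Omega)$ and simultaneously shows that $T^\ast$ solves \eqref{state_eq_eps} with $\beta=0$. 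Uniqueness in Theorem~\ref{theo:existence} forces $T^\ast=T_0$, and since every sequence $\beta_n\to 0^+$ has a subsequence converging in $H^2(\Omega)$ to the same limit $T_0$, we conclude $T_\beta\to T_0$ in $H^2(\Omega)$ as $\beta\to 0^+$.

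The only genuinely delicate point is that $v\mapsto\sqrt{\beta+|v|_M^2}$ is not Lipschitz uniformly in $\beta$ near the (possibly non-empty) zero set of $|\nabla T_0|_M$; the elementary inequality $\sqrt{\beta+s^2}-s\le\sqrt{\beta}$ neutralises this issue at once. The remainder is the routine combination of weak $H^2$-compactness, strong $H^1$-compactness, continuous dependence for the linear elliptic problem, and uniqueness of the limiting Eikonal equation.
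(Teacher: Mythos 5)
Your argument is correct and follows essentially the same route as the paper: a $\beta$-uniform $H^2$ bound from Theorem~\ref{theo:existence}, weak $H^2$/strong $H^1$ compactness, the estimate $0\le\sqrt{\beta+s^2}-s\le\sqrt{\beta}$ combined with the reverse triangle inequality to pass to the limit in the nonlinearity, identification of the limit by uniqueness, and finally the linear elliptic isomorphism to upgrade to strong $H^2$ convergence. The only (cosmetic) difference is that you phrase the last step as continuity of the inverse of \eqref{poisson} applied to converging right-hand sides, whereas the paper applies the equivalent norm $\|\div(M\nabla\cdot)\|_{L^2(\Omega)}$ directly to the difference $T_\beta-T_0$ of the two equations; these are the same fact.
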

\begin{proof}
By Theorem \ref{theo:existence} the family $\{T_H^\beta\}_{\beta\in [0,1]}$ is bounded in $H^2(\Omega)\cap V$ and hence there exists a subsequence, denoted in the same manner, and $\hat T_H \in H^2(\Omega) \cap V$ such that $T_H^\beta \rightharpoonup \hat T_H$ in $H^2(\Omega)$ and $T_H^\beta \to \hat T_H$ in $V$. Thus we can pass to the limit in
\begin{multline*}
\int_\Omega\eps M\nabla T_H^\beta\cdot\nabla \varphi+\sqrt{\beta+|\nabla (T_H^\beta+g)|_M^2}\varphi~\mathrm dx\\
=\int_\Omega(1+\varepsilon\div(M\nabla g))\varphi~\mathrm dx, \text{ for all } \varphi \in V
\end{multline*}
to obtain that
\begin{equation*}
\int_\Omega\eps M \nabla \hat T_H\cdot\nabla \varphi +|\nabla (\hat T_H+g)|_M\varphi~\mathrm dx=\int_\Omega(1+\varepsilon\div(M\nabla g)) \varphi~\mathrm dx, \text{ for all } \varphi \in V.
\end{equation*}
Moreover, by the trace theorem $\hat T_H=0\text{ on }~\Gamma_i, \text{ for } i=1,\ldots,n$. Now we set $T_0=\hat T_H+g$.
By uniqueness, asserted in Theorem \ref{theo:existence} we have $\hat T_H = T_H$, where $T_H$ is the homogenous solution for $\beta =0$ from Theorem \ref{theo:existence}, and thus the whole family $T_\beta=T_H^\beta+g$ converges to $T_0$ in $V$.
Moreover we have
\begin{multline*}
\int_\Omega\eps^2 |\div(M\nabla (T_\beta - T_0))|^2~\mathrm dx=
\int_\Omega\eps^2 |\div(M\nabla (T_H^\beta - T_H))|^2~\mathrm dx\\
= \int_\Omega(\sqrt{\beta+|\nabla T_\beta|_M^2}-|\nabla T_0|_M)^2~\mathrm dx \to 0
\end{multline*}
for $\beta \to 0^+$. Since $T_H^\beta|_{\Gamma_i},T_H|_{\Gamma_i}=0$ and $\left(\int_\Omega|\div(M\nabla \cdot)|^2~\mathrm dx\right)^{1/2}$ defines an equivalent norm to the $H^2(\Omega)$-norm on $H^2(\Omega)\cap V$, the claim follows.
\end{proof}
\section{Well posedness of the linearized and adjoint state equation}

Throughout the rest of the theoretical part of this work $T\in W^{2,6}(\Omega)$ with $M\nabla T\cdot n|_{\Gamma_n}=0$, and  $\beta \in (0,1]$ are assumed. Further $u$, $r$ and $h$ are chosen arbitrarily in $\R^n$, $L^2(\Omega)$ and $H^{1/2}(\Gamma_N)$, respectively. We analyse the well-posedness of the following equations
\begin{equation}\label{lin_state_eq_eps}
\left\{
\begin{aligned}
-\eps\div(M\nabla \delta T)+\frac{M\nabla T\cdot\nabla \delta T}{\sqrt{\beta+|\nabla T|_M^2}}&=r&\text{in}~\Omega\\
\delta T&= u_i&\text{on}~\Gamma_i,\quad i=1,\ldots,N\\
\eps M\nabla \delta T\cdot n &=0&\text{on}~\Gamma_N.
\end{aligned}\right.
\end{equation}
and
\begin{equation}\label{ad_state_eq_eps1}
\left\{
\begin{aligned}
-\eps\div(M\nabla \phi)-\div\left(\frac{M\nabla T}{\sqrt{\beta+|\nabla T|_M^2}}\phi\right)&=0&\text{in}~\Omega\\
\phi&=0&\text{on}~\Gamma\\
\eps M\nabla \phi\cdot n&=h&\text{on}~\Gamma_N.
\end{aligned}\right.
\end{equation}
 For this purpose we define the bilinear form $B\colon V \times V \rightarrow \R$ by
\[
B(v,\phi):=\eps(M\nabla v,\nabla \phi)_{L^2(\Omega)}+\left(\frac{M\nabla T\cdot \nabla v}{\sqrt{\beta+|\nabla T|_M^2}},\phi\right)_{L^2(\Omega)}
\]
for any $\phi,v\in V$.
We recall the function $g\in W^{2,6}(\Omega)$ defined in the previous section.
\begin{definition}\label{def:tangent}
The function $\delta T=v+g\in H^1(\Omega)$ is called a weak solution of \eqref{lin_state_eq_eps} if $v\in V$ solves the variational equation
\begin{equation}\label{eq:kk1}
B(v,\phi)=\int_{\Omega}\left(\eps\div(M\nabla g)-\frac{M\nabla T\cdot\nabla g}{\sqrt{\beta+|\nabla T|_M^2}}+r\right)\phi~\mathrm dx\quad \forall \phi\in V.
\end{equation}
Analogously  $\phi\in  V$ is called a weak solution of \eqref{ad_state_eq_eps1} if it solves the variational equation
\[
B(v,\phi)=\int_{\Gamma_N}gv~\mathrm ds\quad \forall v\in  V.
\]
\end{definition}
We introduce the operator $\mathcal A\colon V \to V^\ast$ and its adjoint $\mathcal A^\ast\colon V\to V^\ast$ by
\[
\langle \mathcal Av,\phi\rangle =B(v,\phi)=\langle v,\mathcal A^\ast\phi\rangle.
\]
for all $v,\phi\in V$.

\begin{proposition}\label{prop:isom}
The operators $\mathcal A\colon V\to V^\ast$ and $\mathcal A^\ast\colon V \to V^\ast$ are isomorphisms. In particular there exists a constant $C(M,T,\eps)$ such that
\begin{equation}\label{eq:inv}
\|\mathcal A^{-1}\|_{\mathcal L(V^\ast,V)}=\|\mathcal A^{-\ast}\|_{\mathcal L(V^\ast, V)}\leq C(M,T,\eps).
\end{equation}
\end{proposition}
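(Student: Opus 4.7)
The plan is to treat $B$ as a coercive principal part perturbed by a bounded convective lower-order term, and then combine Fredholm theory with a maximum-principle injectivity argument in the spirit of the uniqueness step in Theorem \ref{theo:existence}.

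First I would verify continuity of $B$ on $V\times V$. The key observation is that the drift coefficient
\[
b(x):=\frac{M(x)\nabla T(x)}{\sqrt{\beta+|\nabla T(x)|_{M(x)}^2}}
\]
is uniformly bounded: using $|M\nabla T|\leq \|M\|_\infty|\nabla T|$ together with $|\nabla T|_M^2\geq \alpha|\nabla T|^2$ gives the pointwise estimate $|b(x)|\leq \|M\|_\infty/\sqrt{\alpha}$, independently of $T$ and $\beta$. Hence $b\in L^\infty(\Omega)^d$ and boundedness of $\mathcal A,\mathcal A^\ast$ is immediate. Ellipticity of $M$ and Young's inequality used to absorb the convective term then produce a G{\aa}rding inequality
\[
B(v,v)+\lambda\|v\|_{L^2(\Omega)}^2\geq \tfrac{\eps\alpha}{2}\|\nabla v\|_{L^2(\Omega)}^2,\qquad v\in V,
\]
for a suitable $\lambda=\lambda(\|b\|_\infty,\eps,\alpha)>0$. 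Since elements of $V$ vanish on the sets $\Gamma_i$ of positive $(d-1)$-measure, Poincar\'e's inequality makes $\|\nabla\cdot\|_{L^2}$ an equivalent norm on $V$; consequently $\mathcal A+\lambda I\colon V\to V^\ast$ is coercive and hence an isomorphism by the Lax--Milgram lemma. The embedding $V\hookrightarrow L^2(\Omega)\hookrightarrow V^\ast$ being compact, $\mathcal A$ differs from an isomorphism by a compact operator and is therefore Fredholm of index zero; the same conclusion applies to $\mathcal A^\ast$.

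It then remains to prove injectivity. If $\mathcal A v=0$ for some $v\in V$, then $v$ is a weak solution of $-\eps\div(M\nabla v)+b\cdot\nabla v=0$ in $\Omega$ with $v=0$ on $\Gamma_i$ and $M\nabla v\cdot n=0$ on $\Gamma_N$, where $b\in L^\infty(\Omega)^d$. The weak maximum principle for elliptic operators with bounded drift and mixed Dirichlet--Neumann data --- the same \cite[Theorem~3.27]{Troianiello87} that was invoked in the uniqueness part of Theorem \ref{theo:existence} --- yields $v\leq 0$; by linearity the same argument applied to $-v$ gives $v\geq 0$, so $v=0$. Combined with the Fredholm property this proves that $\mathcal A$ is an isomorphism, and duality yields the same for $\mathcal A^\ast$. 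The quantitative bound \eqref{eq:inv} is then a direct consequence of the open mapping theorem, with $C(M,T,\eps)$ depending only on $\|b\|_\infty$, $\alpha$, $\eps$ and the Poincar\'e constant on $V$.

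The main obstacle will be the injectivity step: the combination of mixed Dirichlet--Neumann boundary conditions with a non-symmetric first-order term makes it essential to verify that the cited maximum principle really covers the present situation rather than only pure Dirichlet problems. Should that verification prove delicate, a robust fallback is a Stampacchia-type truncation argument testing $B(v,\cdot)=0$ against $v^+$ and $v^-$, both of which lie in $V$: the convective contribution is then absorbed into the coercive principal part via Young's inequality and Poincar\'e's inequality, forcing $v^\pm\equiv 0$.
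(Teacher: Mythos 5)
Your proposal is correct and follows essentially the same route the paper indicates: the paper's proof is a one-line reference to \cite{kunisch2019inverse} invoking precisely G{\aa}rding's inequality (to get a Fredholm operator of index zero via the compact embedding $V\hookrightarrow L^2(\Omega)$) and the weak maximum principle of \cite[Theorem~3.27]{Troianiello87} (for injectivity), which is exactly the argument you spell out, including the key uniform bound $\|b\|_\infty\leq\|M\|_\infty/\sqrt{\alpha}$ on the drift. Your write-up simply supplies the details the paper delegates to the cited reference.
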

\begin{proof}
The claims follow from a similar argumentation as in the proof of Proposition 2 in \cite{kunisch2019inverse} using Garding's inequality and the weak maximum principle.
\end{proof}
We introduce the space
\[
W=\{v\in H^2(\Omega)|\quad v|_{\Gamma_i}\in \mathbb R,~i=1,\ldots,N,~ M\nabla v\cdot n|_{\Gamma_N}=0\}.
\]
The space $W$ is a closed subspace of $H^2(\Omega)$, since the trace as well as the normal trace operator are continuous.
\begin{proposition}\label{prop:tangent}
Equation \eqref{lin_state_eq_eps} has a unique weak solution which satisfies $\delta T\in W$ and
\begin{equation}\label{eq:kk2}
\|\delta T\|_{H^2(\Omega)}\leq C(T,M,\eps,\beta) (|u|+\|r\|_{L^2(\Omega)}).
\end{equation}
\end{proposition}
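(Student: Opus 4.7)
\medskip

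\noindent\textbf{Plan.} The plan is to reduce \eqref{lin_state_eq_eps} to an equation with homogeneous Dirichlet data on $\bigcup_i\Gamma_i$, obtain a weak solution in $V$ via Proposition \ref{prop:isom}, and then bootstrap to $H^{2}$-regularity by appealing to the regularity assumption posted for \eqref{poisson}. The key observation that makes the bootstrap work is that, because $T\in W^{2,6}(\Omega)$ and $\beta>0$, the first-order coefficient $M\nabla T/\sqrt{\beta+|\nabla T|_M^2}$ is in $L^\infty(\Omega)^d$.

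\medskip

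\noindent\textbf{Step 1: reduction to the homogeneous problem.} Using the lift $g\in W^{2,6}(\Omega)$ from Section~3, which satisfies $g|_{\Gamma_i}=u_i$, $\|g\|_{W^{2,6}(\Omega)}\leq c|u|$ and vanishes near $\Gamma_N$, I set $\delta T=v+g$ with the unknown $v\in V$. Then \eqref{lin_state_eq_eps} is equivalent to the variational equation \eqref{eq:kk1}. The Sobolev embedding $W^{1,6}(\Omega)\hookrightarrow L^\infty(\Omega)$ (valid for $d\leq 3$) together with $T\in W^{2,6}(\Omega)$ yields $\nabla T\in L^\infty(\Omega)^d$; combined with $\beta>0$ and $M\in W^{1,\infty}$ this gives
\[
\Bigl\|\tfrac{M\nabla T}{\sqrt{\beta+|\nabla T|_M^2}}\Bigr\|_{L^\infty(\Omega)}\leq C(T,M,\beta).
\]
The right-hand side of \eqref{eq:kk1} therefore defines a functional in $V^*$ whose norm is bounded by $C(T,M,\eps,\beta)(|u|+\|r\|_{L^2(\Omega)})$.

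\medskip

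\noindent\textbf{Step 2: existence, uniqueness, and the $V$-estimate.} Proposition \ref{prop:isom} asserts that $\mathcal A\colon V\to V^*$ is an isomorphism, so \eqref{eq:kk1} admits a unique solution $v\in V$, and \eqref{eq:inv} yields
\[
\|v\|_V\leq C(T,M,\eps,\beta)\bigl(|u|+\|r\|_{L^2(\Omega)}\bigr).
\]
This proves uniqueness and existence of a weak solution $\delta T=v+g\in H^1(\Omega)$.

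\medskip

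\noindent\textbf{Step 3: $H^2$-regularity via bootstrap.} Rewriting \eqref{eq:kk1} in strong form as
\[
-\eps\div(M\nabla v)=r+\eps\div(M\nabla g)-\frac{M\nabla T\cdot\nabla(v+g)}{\sqrt{\beta+|\nabla T|_M^2}},
\]
the right-hand side lies in $L^2(\Omega)$: the first two terms by $g\in W^{2,6}\hookrightarrow H^2$, the last because the $L^\infty$ bound above multiplies $\nabla(v+g)\in L^2$. Since $v|_{\Gamma_i}=0$ and the natural boundary condition on $\Gamma_N$ is encoded in the variational formulation, the regularity hypothesis on \eqref{poisson} applies and delivers $v\in H^2(\Omega)$ with $\|v\|_{H^2(\Omega)}\leq C(T,M,\eps,\beta)(|u|+\|r\|_{L^2(\Omega)})$. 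Adding back $g\in W^{2,6}(\Omega)\subset H^2(\Omega)$ gives $\delta T\in H^2(\Omega)$; the constant boundary values on the $\Gamma_i$ and the homogeneous Neumann condition on $\Gamma_N$ (obtained a posteriori from integration by parts once $H^2$-regularity is available) confirm $\delta T\in W$, and \eqref{eq:kk2} follows.

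\medskip

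\noindent\textbf{Main obstacle.} The only delicate point is Step~3, specifically guaranteeing that the drift coefficient is in $L^\infty$ so that the Poisson regularity from \eqref{poisson} can be invoked without losing integrability. This relies crucially on $\beta>0$ (to prevent the denominator from vanishing) and on the $W^{2,6}$-regularity of $T$ established in Theorem~\ref{theo:existence} (so that $\nabla T\in L^\infty$). Consequently the constant in \eqref{eq:kk2} inevitably depends on $\beta$, which is consistent with the statement.
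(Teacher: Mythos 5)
Your proposal is correct and follows essentially the same route as the paper: lift the boundary data with $g$, solve the homogeneous problem in $V$ via Proposition \ref{prop:isom}, and bootstrap to $H^2$ by moving the first-order term to the right-hand side and invoking the elliptic regularity assumed for \eqref{poisson}. The only cosmetic difference is that the paper bounds the drift term by Cauchy--Schwarz in the $M$-inner product, giving $\|M\nabla T\cdot\nabla h/\sqrt{\beta+|\nabla T|_M^2}\|_{L^2}\leq C(M)\|\nabla h\|_{L^2}$ uniformly in $\beta$ and without needing $\nabla T\in L^\infty$, whereas your $L^\infty$ bound on the coefficient relies on $\beta>0$ and $T\in W^{2,6}(\Omega)$; both suffice here.
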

\begin{proof}
First we define $L(T,h):=\frac{M\nabla T\cdot \nabla h}{\sqrt{\beta+|\nabla T|_M^2}}$. We easily see that \[\|L(T,h)\|_{L^2(\Omega)}\leq C(M)\|\nabla h\|_{L^2(\Omega)}\] holds true.
Thus Proposition \ref{prop:isom} gives us the existence of $v\in V$ satisfying \eqref{eq:kk1} and we have the estimate
\begin{multline*}
\|v\|_{H^1_0(\Omega \cap\Gamma_N)}\leq C(M,T,\eps)(\|g\|_{W^{2,6}(\Omega)}+\|L(T,g)\|_{L^2(\Omega)}+\|r\|_{L^2(\Omega)})\\
\leq C(M,T,\eps)(\|g\|_{W^{2,6}(\Omega)}+\|r\|_{L^2(\Omega)})\\
\leq C(M,T,\eps)(|u|+\|r\|_{L^2(\Omega)}).
\end{multline*}
This implies that $\delta T=v+g$ is the unique weak solution  of \eqref{lin_state_eq_eps}. Moving the term $L(T,v)$ to  the righthand side of \eqref{lin_state_eq_eps}, we conclude with standard elliptic regularity that $\delta T\in W$ and that \eqref{eq:kk2} holds.
\end{proof}

\begin{proposition}
Equation \eqref{ad_state_eq_eps1} has a unique weak solution  which satisfies $\phi \in H^2(\Omega)\cap V$ and
\[
\|\phi\|_{H^2(\Omega)}\leq C(M,T,\eps)\|h\|_{H^{1/2}(\Gamma_N)}.
\]
\end{proposition}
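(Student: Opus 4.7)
The weak formulation for this adjoint problem reads $B(v,\phi) = \int_{\Gamma_N} h v \,\de s$ for every $v \in V$. The right-hand side defines a continuous linear functional on $V$: since the trace operator maps $V$ continuously into $L^2(\Gamma_N)$ and $h \in H^{1/2}(\Gamma_N) \hookrightarrow L^2(\Gamma_N)$, one has $\bigl|\int_{\Gamma_N} h v \,\de s\bigr| \leq C\,\|h\|_{H^{1/2}(\Gamma_N)} \|v\|_V$. Hence Proposition~\ref{prop:isom} applied to $\mathcal{A}^\ast$ yields a unique $\phi \in V$ satisfying the variational identity, together with the estimate
\[
\|\phi\|_V \leq C(M,T,\eps)\,\|h\|_{H^{1/2}(\Gamma_N)}.
\]

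The main work is to lift this $V$-regularity to $H^2(\Omega)$. The plan is to rewrite the equation in non-divergence form as
\[
-\eps\div(M\nabla\phi) = F\cdot\nabla\phi + (\div F)\,\phi \quad\text{in }\Omega,\qquad F := \frac{M\nabla T}{\sqrt{\beta + |\nabla T|_M^2}},
\]
and then show that the right-hand side lies in $L^2(\Omega)$. Since $T \in W^{2,6}(\Omega)$ and $\beta>0$, the denominator is bounded below by $\sqrt{\beta}$, so $F \in W^{1,6}(\Omega)^d \cap L^\infty(\Omega)^d$ with norms controlled by $\|T\|_{W^{2,6}}$ and $\|M\|_{W^{1,\infty}}$. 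The term $F\cdot\nabla\phi$ is then in $L^2$ with norm bounded by $C(M,T)\|\phi\|_V$, and $(\div F)\phi$ lies in $L^3(\Omega)\subset L^2(\Omega)$ by H\"older's inequality using $\div F \in L^6$ and the embedding $V \hookrightarrow L^6(\Omega)$ (valid for $d\leq 3$), again with norm bounded by $C(M,T)\|\phi\|_V$.

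Once the right-hand side is identified as an $L^2$ source, the interior and boundary elliptic regularity theory for the mixed Dirichlet/Neumann problem applies. The only subtlety relative to the a priori assumption made for \eqref{poisson} is the inhomogeneous Neumann datum $h \in H^{1/2}(\Gamma_N)$; this is handled by subtracting a lifting $\psi_h \in H^2(\Omega)$ with $\eps M\nabla \psi_h \cdot n = h$ on $\Gamma_N$, $\psi_h = 0$ on each $\Gamma_i$, and $\|\psi_h\|_{H^2} \leq C\|h\|_{H^{1/2}(\Gamma_N)}$, so that $\phi - \psi_h$ solves a problem of the form \eqref{poisson} with an $L^2$ source. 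Combining all estimates gives
\[
\|\phi\|_{H^2(\Omega)} \leq C(M,T,\eps)\bigl(\|\phi\|_V + \|h\|_{H^{1/2}(\Gamma_N)}\bigr) \leq C(M,T,\eps)\,\|h\|_{H^{1/2}(\Gamma_N)},
\]
the last inequality coming from Step~1. The principal technical obstacle is the verification that $(\div F)\phi \in L^2(\Omega)$, which hinges essentially on the $W^{2,6}$-regularity of $T$ established in Theorem~\ref{theo:existence} and on the strict positivity of $\beta$; without $\beta>0$ this argument would fail, which is why the hypothesis $\beta \in (0,1]$ is imposed at the start of this section.
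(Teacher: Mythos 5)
Your proposal is correct and follows essentially the same route as the paper: existence, uniqueness and the $V$-estimate from Proposition~\ref{prop:isom}, then moving $\div\bigl(\tfrac{M\nabla T}{\sqrt{\beta+|\nabla T|_M^2}}\phi\bigr)$ to the right-hand side, expanding it by the product rule, and using $T\in W^{2,6}(\Omega)$ together with $\beta>0$ and the embedding $V\hookrightarrow L^6(\Omega)$ to place it in $L^2(\Omega)$ before invoking elliptic regularity. Your explicit lifting $\psi_h$ of the Neumann datum and the careful bookkeeping of integrability exponents only spell out details the paper leaves implicit.
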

\begin{proof}
Proposition \ref{prop:isom} implies the existence of a weak solution which satisfies the estimate
\[
\|\phi\|_{H^1_0(\Omega\cup\Gamma_N)}\leq C(M,T,\eps)\|h\|_{H^{1/2}(\Gamma_N)}.
\]
Moving the $\div$-term to the righthand side of \eqref{ad_state_eq_eps1} and using
\[
\left\|\div\left(\frac{M\nabla T\phi}{\sqrt{\beta+|\nabla T|_M^2}}\right)\right\|_{L^2(\Omega)}\leq C(M)(\|T\|_{H^2(\Omega)}+1)\|\phi\|_{H^1_0(\Omega\cup\Gamma_N)}
\]
which follows from
\[
\div\left(\frac{M\nabla T\varphi}{\sqrt{\beta+\|\nabla T\|_M^2}}\right)=\div\left(\frac{M\nabla T}{\sqrt{\beta+\|\nabla T\|_M^2}}\right)\varphi+\frac{M\nabla T\cdot \nabla \phi}{\sqrt{\beta+\|\nabla T\|_M^2}}
\]
the claim follows from standard elliptic regularity.
\end{proof}

\section{Derivative of $J$}
In this section we characterize the gradient of $J$ using the linearized and adjoint state equations.
\begin{lemma}\label{lem:Lipschitz}
There exists a constant $C>0$ independent of $\beta>0$ such that
\[
\left|\sqrt{\beta+|\nabla T_1|_M^2}-\sqrt{\beta+|\nabla T_2|_M^2}\right|\leq |\nabla(T_1-T_2)|_M.
\]
holds.
\end{lemma}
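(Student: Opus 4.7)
The plan is to exploit exactly the observation already used in the uniqueness part of Theorem~\ref{theo:existence}: the quantity $\sqrt{\beta+|v|_M^2}$ is itself a norm on $\R^{d+1}$, so the bound is nothing but the reverse triangle inequality.

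More precisely, I would first introduce the block-diagonal matrix
\[
\tilde M(x)=\begin{pmatrix} M(x) & 0 \\ 0 & 1 \end{pmatrix}\in\R^{(d+1)\times(d+1)},
\]
which is symmetric and positive definite (with ellipticity constant $\min(\alpha,1)$) because $M$ is. Hence $|w|_{\tilde M}:=\sqrt{\tilde M(x)w\cdot w}$ defines a norm on $\R^{d+1}$. For any $v\in\R^d$, setting $w=(v,\sqrt{\beta})\in\R^{d+1}$ gives the identity
\[
|w|_{\tilde M}=\sqrt{M(x)v\cdot v+\beta}=\sqrt{\beta+|v|_{M}^2}.
\]

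Then, applying the inequality $\bigl||w_1|_{\tilde M}-|w_2|_{\tilde M}\bigr|\le |w_1-w_2|_{\tilde M}$ (the reverse triangle inequality for the norm $|\cdot|_{\tilde M}$, valid pointwise in $x$) with $w_i=(\nabla T_i(x),\sqrt{\beta})$, we get
\[
\bigl|\sqrt{\beta+|\nabla T_1|_M^2}-\sqrt{\beta+|\nabla T_2|_M^2}\bigr|
\le \bigl|(\nabla T_1-\nabla T_2,\,0)\bigr|_{\tilde M}
= |\nabla(T_1-T_2)|_M,
\]
which is the claimed bound (with constant $C=1$, independent of $\beta\ge 0$).

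There is no real obstacle here; the only thing to be careful about is the degenerate case $\beta=|\nabla T_i|_M^2=0$ at some point $x$, where one argues directly (both sides vanish), and the boundary case where the norm identification uses $\sqrt\beta\ge 0$ rather than $>0$. An alternative, equivalent derivation would note that $v\mapsto\sqrt{\beta+|v|_M^2}$ is convex with subdifferential contained in the closed unit ball of $|\cdot|_M$ (as is visible from the formula for $B(x,v)$ in the uniqueness proof), whence the map is $1$-Lipschitz with respect to $|\cdot|_M$; this reproduces the same estimate.
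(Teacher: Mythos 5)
Your proof is correct and takes essentially the same route as the paper's: both rewrite $\sqrt{\beta+|v|_M^2}$ as a norm on $\R^{d+1}$ (the paper uses the Euclidean norm of $(\beta^{1/2},M^{1/2}v)$, you use the $\tilde M$-weighted norm of $(v,\sqrt{\beta})$, which is the same quantity) and conclude by the reverse triangle inequality. No gaps.
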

\begin{proof}
There holds
\[
\sqrt{\beta+|\nabla T_1|_M^2}=|(\beta^{1/2},(M^{1/2}\nabla T_1)_1,\ldots,(M^{1/2}\nabla T_1)_d)|
\]
Using the reverse triangle inequality for $|\cdot|$ we get
\begin{multline*}
\left|\sqrt{\beta+|\nabla T_1|_M^2}-\sqrt{\beta+|\nabla T_2|_M^2}\right|\\
\leq |(0,(M^{1/2}\nabla (T_1-T_2))_1,\ldots,(M^{1/2}\nabla (T_1-T_2))_d)|\\
=|\nabla (T_1-T_2)|_M.
\end{multline*}
\end{proof}
\begin{lemma}\label{lem:f_frechet}
The function $f\colon H^2(\Omega)\to L^2(\Omega)$ defined by $f(T):=\sqrt{\beta+|\nabla T|_M^2}$ is Frechet differentiable with derivative
\[
f'(T)h=\frac{M\nabla T\cdot \nabla h}{f(T)}.
\]
\end{lemma}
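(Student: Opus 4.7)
The plan is to verify Frechet differentiability directly from the definition by showing that the remainder
\[
R(T,h) := f(T+h) - f(T) - \frac{M\nabla T\cdot \nabla h}{f(T)}
\]
satisfies $\|R(T,h)\|_{L^2(\Omega)} = O(\|h\|_{H^2(\Omega)}^2)$ as $\|h\|_{H^2}\to 0$. Since $\beta>0$ is fixed, $f(T)$ and $f(T+h)$ are uniformly bounded below by $\sqrt\beta$, so the strategy reduces to extracting a pointwise bound of the form $|R(T,h)(x)| \le C(\beta,M)\,(1+|\nabla T(x)|)\,|\nabla h(x)|^2$ and then passing to $L^2$ via a Sobolev embedding.

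First I would rationalise $f(T+h)-f(T)$ using the identity $f(T+h)^2-f(T)^2 = 2M\nabla T\cdot\nabla h + |\nabla h|_M^2$ to get
\[
f(T+h) - f(T) = \frac{2M\nabla T\cdot\nabla h + |\nabla h|_M^2}{f(T+h)+f(T)}.
\]
Subtracting $(M\nabla T\cdot\nabla h)/f(T)$ and placing everything over a common denominator splits the remainder as
\[
R(T,h) = \frac{(M\nabla T\cdot\nabla h)\bigl(f(T)-f(T+h)\bigr)}{f(T)\bigl(f(T+h)+f(T)\bigr)} + \frac{|\nabla h|_M^2}{f(T+h)+f(T)}.
\]
Here Lemma \ref{lem:Lipschitz} gives $|f(T)-f(T+h)| \le |\nabla h|_M$, so the first term, although its factor $M\nabla T\cdot\nabla h$ is only linear in $\nabla h$, is in fact quadratic in $\nabla h$; combined with the boundedness of $M$ and the lower bounds on the denominators, this yields the announced pointwise estimate.

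To pass from the pointwise bound to $L^2$, I would invoke the Sobolev embedding $H^2(\Omega)\hookrightarrow W^{1,6}(\Omega)$ (valid for $d\le 3$) together with H\"older's inequality with exponents $3$ and $3/2$ to obtain
\[
\|R(T,h)\|_{L^2(\Omega)}^2 \le C\,\bigl\|1+|\nabla T|\bigr\|_{L^6(\Omega)}^2\,\|\nabla h\|_{L^6(\Omega)}^4 \le C(M,\beta,T)\,\|h\|_{H^2(\Omega)}^4.
\]
Hence $\|R(T,h)\|_{L^2} = O(\|h\|_{H^2}^2) = o(\|h\|_{H^2})$, and a direct check shows that $h \mapsto (M\nabla T\cdot\nabla h)/f(T)$ is bounded from $H^2(\Omega)$ to $L^2(\Omega)$, so Frechet differentiability is established. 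The main technical step is the algebraic rearrangement leading to the two-term split for $R(T,h)$; beyond this, no essential obstacle is expected, since the non-degeneracy $\beta>0$ prevents any singular behaviour and the constants are allowed to depend on $\beta$.
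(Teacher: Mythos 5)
Your proposal is correct and follows essentially the same route as the paper: the same rationalisation of $f(T+h)-f(T)$ via the conjugate square root, the same remainder decomposition (the paper writes it as a single fraction with numerator $|\nabla h|_M^2 f(T)+M\nabla T\cdot\nabla h\,(f(T)-f(T+h))$, which is exactly your two-term split), and the same use of Lemma \ref{lem:Lipschitz}. The only cosmetic difference is in the final step: the paper removes the $\nabla T$ factor pointwise via $|M\nabla T\cdot\nabla h|\le f(T)|\nabla h|_M$ and then uses $H^2(\Omega)\hookrightarrow W^{1,4}(\Omega)$, whereas you retain $(1+|\nabla T|)$ and compensate with $H^2(\Omega)\hookrightarrow W^{1,6}(\Omega)$ plus H\"older; both yield $\|R(T,h)\|_{L^2}=O(\|h\|_{H^2}^2)$.
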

\begin{proof}
By multiplication with the conjugate square root we get
\begin{multline*}
|f(T+h)-f(T)-f'(T)h|=\left|\frac{f(T+h)^2-f(T)^2}{f(T+h)+f(T)}-\frac{M\nabla T\cdot\nabla h}{f(T)}\right|\\
=\left|\frac{|\nabla T|_M^2+2M\nabla T\cdot\nabla h+|\nabla h|_M^2-|\nabla T|_M^2}{f(T+h)+f(T)}-\frac{M\nabla T\cdot\nabla h}{f(T)}\right|\\
=\left|\frac{(2M\nabla T\cdot \nabla h+|\nabla h|_M^2)f(T)-M\nabla T\cdot\nabla h\,(f(T+h)+f(T))}{f(T+h)f(T)+f(T)^2}\right|\\
=\left|\frac{|\nabla h|_M^2f(T)+M\nabla T\cdot \nabla h\,(f(T)-f(T+h))}{f(T+h)f(T)+f(T)^2}\right|\\
\leq \frac{|\nabla h|_M^2}{f(T)}+\frac{f(T)|\nabla h|^2_M}{f(T)^2}\leq \frac{2}{f(T)}|\nabla h|^2_M
\end{multline*}
utilizing Lemma \ref{lem:Lipschitz} and $\|\nabla T\|_M\leq \sqrt{\beta+\|\nabla T\|_M^2}=f(T)$.
Then using the embedding $H^2(\Omega)\hookrightarrow W^{1,4}(\Omega)$ and that $f(T)^{-1}\in L^{\infty}(\Omega)$ we get
\begin{multline*}
\frac{\|f(T+h)-f(T)-f'(T)h\|_{L^2(\Omega)}}{\|h\|_{H^2(\Omega)}}\leq C(M,T)\frac{\|h\|_{W^{1,4}(\Omega)}^2}{\|h\|_{H^2(\Omega)}}\\
\leq C(M,T)\|h\|_{H^2(\Omega)}.
\end{multline*}
\end{proof}
\begin{theorem}
The operator $S\colon \R^N\to W$, $u\mapsto T$ is Frechet differentiable and its derivative $S'(u)\delta u$ in direction $\delta u\in \R^N$ is given by
the solution $\delta T\in W$ of \eqref{lin_state_eq_eps} with $u_i=\delta u_i$ for $i=1,\ldots,N$ and $r=0$.
\end{theorem}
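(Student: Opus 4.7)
The plan is to verify Frechet differentiability by the standard scheme: given $u,\delta u\in\R^N$, set $T:=S(u)$, $T^{\delta u}:=S(u+\delta u)$, let $\delta T\in W$ be the solution of \eqref{lin_state_eq_eps} with boundary data $\delta u_i$ and $r=0$ (which exists by Proposition \ref{prop:tangent}), and show that the error $w:=T^{\delta u}-T-\delta T$ satisfies $\|w\|_{H^2(\Omega)}=o(|\delta u|)$. Since $\delta T$ depends linearly on $\delta u$ through the linearized equation and since Proposition \ref{prop:tangent} gives a bound linear in $|\delta u|$, this will identify $S'(u)\delta u=\delta T$.

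The first step, which I expect to be the main obstacle, is establishing Lipschitz continuity of $S$ with values in $H^2(\Omega)$, i.e., $\|T^{\delta u}-T\|_{H^2(\Omega)}\le C|\delta u|$. Subtracting the state equations for $T^{\delta u}$ and $T$ and using the rationalization
\[
\sqrt{\beta+|\nabla T^{\delta u}|_M^2}-\sqrt{\beta+|\nabla T|_M^2}=\frac{M\nabla(T^{\delta u}+T)\cdot\nabla(T^{\delta u}-T)}{\sqrt{\beta+|\nabla T^{\delta u}|_M^2}+\sqrt{\beta+|\nabla T|_M^2}},
\]
one rewrites the equation for $\Delta T:=T^{\delta u}-T$ as a linear elliptic PDE $-\eps\div(M\nabla\Delta T)+a\cdot\nabla\Delta T=0$ in $\Omega$ with $\Delta T|_{\Gamma_i}=\delta u_i$ and $M\nabla\Delta T\cdot n|_{\Gamma_N}=0$, where the coefficient $a$ is bounded in $L^\infty$ in terms of $\|T\|_{W^{2,6}}$ and $\|T^{\delta u}\|_{W^{2,6}}$. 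By Theorem \ref{theo:existence} these $W^{2,6}$-norms are bounded uniformly on bounded sets of $u$, and the same isomorphism argument underlying Proposition \ref{prop:isom} applied to this linear equation (together with the $W^{2,6}$-lifting $g$ of the boundary data) yields the desired estimate $\|\Delta T\|_{H^2(\Omega)}\le C(M,\eps,T)|\delta u|$.

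Having Lipschitz continuity, the remainder of the proof is routine. Subtracting the equations for $T^{\delta u}$, $T$ and $\delta T$, and writing $f(T):=\sqrt{\beta+|\nabla T|_M^2}$ as in Lemma \ref{lem:f_frechet}, one finds that $w$ satisfies
\[
-\eps\div(M\nabla w)+f'(T)w=-R\quad\text{in }\Omega,
\]
with $w|_{\Gamma_i}=0$ and $M\nabla w\cdot n|_{\Gamma_N}=0$, where $R:=f(T^{\delta u})-f(T)-f'(T)(T^{\delta u}-T)$. This is precisely the linearized equation \eqref{lin_state_eq_eps} with $u_i=0$ and right-hand side $r=-R$, so Proposition \ref{prop:tangent} gives $\|w\|_{H^2(\Omega)}\le C\|R\|_{L^2(\Omega)}$. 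By Lemma \ref{lem:f_frechet} we have $\|R\|_{L^2(\Omega)}=o(\|T^{\delta u}-T\|_{H^2(\Omega)})$, and combining with Step 1 yields $\|w\|_{H^2(\Omega)}=o(|\delta u|)$, completing the proof. Finally, $\delta T\in W$ because it solves \eqref{lin_state_eq_eps} with $r=0$, so the range of $S'(u)$ lies in $W$ as claimed.
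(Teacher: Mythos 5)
Your proof is correct in substance, but it takes a genuinely different route from the paper. The paper does not estimate the remainder directly: it packages the PDE and the boundary conditions into a map $E\colon W\times\R^N\to L^2(\Omega)\times\R^N$, uses Lemma~\ref{lem:f_frechet} to get Frechet differentiability of $E$ and Proposition~\ref{prop:tangent} to see that $D_TE(T,u)$ is an isomorphism, and then invokes the implicit function theorem; differentiability of $S$ and the formula $S'(u)\delta u=D_TE(T,u)^{-1}(0,\delta u)$ come out in one stroke, with the Lipschitz continuity of $S$ obtained for free rather than proved. Your direct verification has to supply exactly that missing Lipschitz estimate as a separate Step~1, and this is the one place where you lean on something not literally established in the paper: the equation for $\Delta T$ has the first-order coefficient $a=\frac{M\nabla(T^{\delta u}+T)}{\sqrt{\beta+|\nabla T^{\delta u}|_M^2}+\sqrt{\beta+|\nabla T|_M^2}}$, which is \emph{not} the coefficient $\frac{M\nabla T}{\sqrt{\beta+|\nabla T|_M^2}}$ covered by Proposition~\ref{prop:isom}, so you should say explicitly that $\|a\|_{L^\infty}\le\|M\|_\infty/\sqrt{\alpha}$ (by the Cauchy--Schwarz inequality for the $M$-inner product, as in the uniqueness part of Theorem~\ref{theo:existence}) and that the Garding/maximum-principle argument of Proposition~\ref{prop:isom} therefore applies verbatim, followed by the usual bootstrap to $H^2$. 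Granting that, Step~2 is clean: $w$ solves \eqref{lin_state_eq_eps} with $u=0$ and $r=-R$, Proposition~\ref{prop:tangent} gives $\|w\|_{H^2}\le C\|R\|_{L^2}$, and Lemma~\ref{lem:f_frechet} gives $\|R\|_{L^2}\le C\|T^{\delta u}-T\|_{H^2}^2\le C|\delta u|^2$. What your approach buys is a quantitative, in fact quadratic, remainder estimate and full transparency about where each hypothesis enters; what the paper's approach buys is brevity and the automatic local solvability of the state equation near $(T_0,u_0)$ --- though the paper does need the global uniqueness from Theorem~\ref{theo:existence} to identify the local implicit function with $S$, a point your argument avoids entirely.
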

\begin{proof}
We introduce the mapping $E\colon W\times \R^N\to L^2(\Omega)\times \mathbb R^N$ defined by
\[
E(T,u)=\left(
         \begin{array}{c}
           -\eps\div(M\nabla T)+\sqrt{\beta+|\nabla T|_M^2}-1 \\
           T|_{\Gamma_1}-u_1\\
           \vdots \\
           T|_{\Gamma_N}-u_N
         \end{array}
       \right)
\]
Using Lemma \ref{lem:f_frechet} it can be argued that $E$ is Frechet differentiable. {\color{black}Moreover due to Proposition \ref{prop:tangent} the operator $D_TE(T,u)\colon W\to L^2(\Omega)\times \mathbb R^N$ given by
\[
D_TE(T,u)\delta T=\left(
         \begin{array}{c}
           -\eps\div(M\nabla \delta T)+\frac{M\nabla T\cdot \nabla \delta T}{\sqrt{\beta+|\nabla T|_M^2}}\\
           \delta T|_{\Gamma_1}\\
           \vdots \\
           \delta T|_{\Gamma_N}
         \end{array}
       \right)
\]
is an isomorphism.} Let $(T_0,u_0)\in W\times \R^N$ such that $E(T_0,u_0)=0$. Then there exists a neighbourhood $V\subseteq W$ of $T_0$ and $U\subseteq \R^N$ of $u_0$ and a Frechet differentiable implicit function $S\colon U\to V$, $u\mapsto T$ with  derivative given by $\delta T=D_TE(T,U)^{-1}(0,\delta u)$. Since $u_0$ is arbitrary, the result follows.
\end{proof}
\begin{theorem}\label{thm:derivative}
There holds
\[
\nabla J(u)=S'(u)^\ast(S(u)-z)=\left(\int_{\Gamma_i}-\eps M\nabla \phi\cdot n~\mathrm ds\right)_{i=1}^N.
\]
where $\phi$ solves \eqref{ad_state_eq_eps1} for $h=S(u)-z$.
\end{theorem}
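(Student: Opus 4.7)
The plan is to combine the chain rule with a standard Lagrangian/adjoint duality argument. Since $J(u)=\tfrac12\int_{\Gamma_N}(S(u)-z)^2\de s$, the preceding theorem on the differentiability of $S$ immediately gives
\[
J'(u)\delta u = \int_{\Gamma_N}(S(u)-z)\,\delta T\,\de s,\qquad \delta T = S'(u)\delta u,
\]
where $\delta T\in W$ solves \eqref{lin_state_eq_eps} with $r=0$ and Dirichlet data $\delta u_i$ on $\Gamma_i$. The key observation is that, with $h:=S(u)-z\in H^{1/2}(\Gamma_N)$ (which is in $H^{1/2}$ by the trace theorem applied to $T\in W^{2,6}(\Omega)\hookrightarrow H^2(\Omega)$), the boundary integral on the right-hand side is exactly the one appearing in the variational form of the adjoint equation \eqref{ad_state_eq_eps1}. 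So I would pick $\phi\in H^2(\Omega)\cap V$ to be the unique weak solution of \eqref{ad_state_eq_eps1} with this choice of $h$, guaranteed by the previous proposition.

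Next I would bring $\delta T$ and $\phi$ together via two integration by parts. Testing the linearized equation with $\phi$ and using $\phi|_{\Gamma_i}=0$ together with $\eps M\nabla\delta T\cdot n|_{\Gamma_N}=0$ kills all boundary contributions, which yields
\[
B(\delta T,\phi)=0.
\]
Testing the adjoint equation with $\delta T$ and integrating by parts twice, the term $\div\bigl(\tfrac{M\nabla T}{\sqrt{\beta+|\nabla T|_M^2}}\phi\bigr)$ produces a boundary contribution $\int_{\partial\Omega}\tfrac{M\nabla T\cdot n}{\sqrt{\beta+|\nabla T|_M^2}}\phi\,\delta T\,\de s$, which vanishes because $M\nabla T\cdot n=0$ on $\Gamma_N$ (by our standing hypothesis on $T$) and $\phi=0$ on each $\Gamma_i$. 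The remaining boundary contribution from the leading elliptic term splits into $\int_{\Gamma_N} h\,\delta T\,\de s$ (using the Neumann data of $\phi$) plus, on each $\Gamma_i$, the constant $\delta u_i$ times $\int_{\Gamma_i}\eps M\nabla\phi\cdot n\,\de s$ (since $\delta T|_{\Gamma_i}=\delta u_i$ is constant). Collecting these gives
\[
B(\delta T,\phi)=\int_{\Gamma_N} h\,\delta T\,\de s+\sum_{i=1}^{N}\delta u_i\int_{\Gamma_i}\eps M\nabla\phi\cdot n\,\de s.
\]

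Combining the two identities and substituting $h=S(u)-z$ yields
\[
J'(u)\delta u=\int_{\Gamma_N}(S(u)-z)\delta T\,\de s=\sum_{i=1}^{N}\delta u_i\int_{\Gamma_i}\bigl(-\eps M\nabla\phi\cdot n\bigr)\de s,
\]
which is the claimed formula, as $\delta u\in\R^N$ was arbitrary. I expect the main technical point to be a careful bookkeeping of signs and of the boundary terms coming from the transposition of the first-order term $\div\bigl(\tfrac{M\nabla T}{\sqrt{\beta+|\nabla T|_M^2}}\phi\bigr)$, in particular verifying that the contribution on $\Gamma_N$ really drops out thanks to the conormal condition $M\nabla T\cdot n|_{\Gamma_N}=0$ built into the state equation; once that is in place, the $W^{2,6}$ and $H^2$ regularity already established for $T$, $\delta T$, and $\phi$ provide more than enough smoothness to justify all the integration by parts.
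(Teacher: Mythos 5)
Your proposal is correct and follows essentially the same route as the paper: compute $DJ(u)\delta u=\int_{\Gamma_N}(S(u)-z)\,S'(u)\delta u\,\mathrm ds$, pair the linearized solution $\delta T$ with the adjoint $\phi$, and integrate by parts so that the only surviving boundary terms are $\sum_i\delta u_i\int_{\Gamma_i}(-\eps M\nabla\phi\cdot n)\,\mathrm ds$, using $\phi|_{\Gamma_i}=0$, $\eps M\nabla\delta T\cdot n|_{\Gamma_N}=0$, and $M\nabla T\cdot n|_{\Gamma_N}=0$. The only cosmetic difference is that you carry the first-order adjoint term in divergence form and transpose it explicitly, whereas the paper works directly with the bilinear form $B$ in which that transposition is already built in; the bookkeeping is identical.
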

\begin{proof}
For each $\delta u \in \mathbb{R}^N$ we have
\[
DJ(u)\delta u=\int_{\Gamma_N}(S(u)-z)S'(u)\delta u~\mathrm ds.
\]
There holds
\begin{multline*}
\int_{\Gamma_N}(S(u)-z)\,S'(u)\delta u ~\mathrm ds=\int_\Omega\eps M\nabla \delta T\cdot\nabla \phi+\frac{M\nabla T\cdot \nabla \delta T}{\sqrt{\beta+|\nabla T|_M^2}}\phi~\mathrm dx\\
-\sum_{i=1}^N\int_{\Gamma_i}\eps M\nabla \phi\cdot n\,\delta T~\mathrm ds\\
=\int_\Omega\left(-\eps\div(M\nabla \delta T)+\frac{M\nabla T\cdot\nabla \delta T}{\sqrt{\beta+|\nabla T|_M^2}}\right)\phi~\mathrm dx+\sum_{i=1}^N\int_{\Gamma_i}\eps M\nabla \delta T\cdot n\,\phi~\mathrm ds\\
+\int_{\Gamma_N}\eps M\nabla \delta T\cdot n\,\phi~\mathrm ds-\sum_{i=1}^N\int_{\Gamma_i}\eps M\nabla \phi\cdot n\,\delta T~\mathrm ds\\
=-\sum_{i=1}^N\int_{\Gamma_i}\eps M\nabla \phi\cdot n\,\delta u_i~\mathrm ds=(S'(u)^\ast(S(u)-z))\cdot \delta u
\end{multline*}
where $\delta T=S'(u)\delta u\in W$ solves \eqref{lin_state_eq_eps} with $r=0$.

\end{proof}

\section{A projected Levenberg Marquardt method}
We solve the inverse problem \eqref{inverse_prob_eps} based on a Levenberg Marquardt strategy. Let $P_{ad}\colon \R^d\to U_{ad}$ be the orthogonal projection on $U_{ad}$. In particular we iterate
\[
u_{k+1}=P_{\text{ad}}(u_{k}+\lambda d)
\]
where $0<\lambda\leq 1$ is the stepsize and $d$ solves the problem
\[
\min_{d\in \R^d}\frac 12\int_{\Gamma_N}(S(u_k)-z+S'(u_k)d)^2~\mathrm ds+\frac {\alpha_k} 2|d|^2=j(d).
\]
The gradient of $j$ is given by
\[
Dj(d)\delta d=\int_{\Gamma_N}(S(u_k)-z+S'(u_k)d)S'(u_k)\delta d~\mathrm ds+\alpha_kd\cdot \delta d.
\]
Thus we have to solve the equation
\[
(S'(u_k)^\ast S'(u_k)+\alpha_kI)d=-S'(u_k)^\ast(S(u_k)-z)
\]
Let $\mathcal H(u)$ be the matrix representation of $S'(u)^\ast S'(u)$.
\begin{proposition}
The matrix $\mathcal H(u)$ is positive definitive and there holds
\[
S'(u_k)^\ast S'(u_k)\delta u=\left(-\eps\int_{\Gamma_i}M\nabla w\cdot n~\mathrm ds\right)_{i=1}^N
\]
with $w = S'(u)^\ast S'(u)\delta u$ and $\delta u\in \mathbb \R^n$. 
\end{proposition}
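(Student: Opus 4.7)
The plan is to obtain the formula directly from the adjoint-based representation already established in Theorem \ref{thm:derivative}, and then to read positive definiteness off from the identity $\mathcal H(u)\delta u\cdot\delta u=\|S'(u)\delta u\|_{L^2(\Gamma_N)}^2$ combined with injectivity of $S'(u)$.

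For the representation formula, I would fix $\delta u\in\R^n$ and let $\delta T=S'(u)\delta u\in W$ be the solution of \eqref{lin_state_eq_eps} with $r=0$ and Dirichlet data $\delta u_i$ on $\Gamma_i$. The chain identity $S'(u)^{\ast}S'(u)\delta u=S'(u)^{\ast}\bigl(\delta T|_{\Gamma_N}\bigr)$ then reduces the claim to evaluating the action of $S'(u)^{\ast}$ on a given boundary datum. Rather than redoing the integration-by-parts, I would simply reuse the computation carried out in the proof of Theorem \ref{thm:derivative}, now with $S(u)-z$ replaced by the trace $\delta T|_{\Gamma_N}$. This is legitimate because that computation only uses $\delta T\in W$ and the adjoint state solving \eqref{ad_state_eq_eps1} with the indicated $h$; it yields the formula with $w$ being the solution of \eqref{ad_state_eq_eps1} for $h=\delta T|_{\Gamma_N}$.

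Positive semi-definiteness follows immediately from
\[
\mathcal H(u)\delta u\cdot\delta u=\langle S'(u)^{\ast}S'(u)\delta u,\delta u\rangle_{\R^n}=\|S'(u)\delta u\|_{L^2(\Gamma_N)}^2\geq 0.
\]
To upgrade this to strict definiteness I need to show that $S'(u)\colon\R^n\to L^2(\Gamma_N)$ is injective. Assume the right-hand side vanishes. Then $\delta T=S'(u)\delta u$ has zero trace on $\Gamma_N$; combined with the Neumann condition $\eps M\nabla\delta T\cdot n=0$ on $\Gamma_N$ already built into \eqref{lin_state_eq_eps}, this gives homogeneous Cauchy data on $\Gamma_N$. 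Since $\delta T$ satisfies the linear elliptic equation
\[
-\eps\div(M\nabla\delta T)+\frac{M\nabla T\cdot\nabla\delta T}{\sqrt{\beta+|\nabla T|_M^2}}=0\qquad\text{in }\Omega,
\]
with $M\in W^{1,\infty}$ and a first-order coefficient in $L^{\infty}(\Omega)^{d}$ (using $T\in W^{2,6}(\Omega)\hookrightarrow W^{1,\infty}(\Omega)$ together with $\beta>0$ to keep the denominator bounded away from zero), Holmgren--Hörmander unique continuation for second-order linear elliptic operators applies and yields $\delta T\equiv 0$ in $\Omega$. Taking traces on each $\Gamma_i$ gives $\delta u_i=0$, hence $\delta u=0$.

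The main obstacle is the unique continuation step; it is precisely where connectedness of $\Omega$ and the regularity of the coefficients matter. Both ingredients are in place by the assumptions of the paper: $\Omega$ is connected by construction, and the ellipticity together with $\beta>0$ and $T\in W^{2,6}(\Omega)$ provide the hypotheses needed for standard elliptic unique continuation. The remaining formula is essentially a recycling of the integration-by-parts already performed in the proof of Theorem \ref{thm:derivative}.
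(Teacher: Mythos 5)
Your argument is correct and follows essentially the same route as the paper: the representation formula is obtained by recycling the integration-by-parts from Theorem \ref{thm:derivative} with $S(u)-z$ replaced by $S'(u)\delta u$, and positive definiteness follows from $\mathcal H(u)\delta u\cdot\delta u=\|S'(u)\delta u\|_{L^2(\Gamma_N)}^2$ together with unique continuation from the homogeneous Cauchy data on $\Gamma_N$. The only cosmetic difference is that the paper cites Carleman-estimate-based unique continuation results for Lipschitz coefficients rather than the Holmgren--H\"ormander theorem you name, which strictly speaking requires more coefficient regularity; the substance of the step is unchanged.
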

\begin{proof}
The formula follows from the exact same calculation as in the proof of Theorem \ref{thm:derivative}, where we replace $T-z$ by $S'(u)\delta u$ and $\varphi$ by $w$.
Moreover we have
\[
\mathcal H(u)\delta u\cdot \delta u=\int_{\Gamma_N}(S'(u)\delta u)^2~\mathrm ds\geq 0.
\]
The corresponding equality implies $S'(u)\delta u=0$ on $\Gamma_N$. This fact, together with the unique continuation principle \cite{arrv09,Salo14} and uniqueness of solutions for the linearized state equation \eqref{lin_state_eq_eps} imply that  $\delta u=0$.
\end{proof}

\section{Numerical example}
In this section we present two numerical examples. In the first one we reconstruct the the activation instants using the proposed Levenberg Marquardt method. In the second example we jointly reconstruct the positions of the activation regions and the activation instants using a combined shape gradient and projected gradient method.
\subsection{Finding the activation instants}
In this example, the computational domain $U$ is given by the unit-square $(0,1)\times (0,1)$.
We consider three activation sites $\omega_i=B_{0.1}(x_i)$ whose midpoints are given
by $x_1=(0.5,0.8)^\top$, $x_2=(0.2,0.2)^\top$ and $x_3=(0.8,0.4)^\top$. Thus we have
$\Omega=U\setminus \bigcup_{i=1}^3\omega_i$. The admissible set is given by $U_{ad}=\{u\in \mathbb R^3|\quad u_i\geq 0\quad i=1,2,3\}$. The observed data is given on the boundary $\Gamma_N$ of $U$.
The domain $U$ is discretized by 66049 vertices and 131072 triangles, which yields a
discretization size of $\approx 4\cdot10^{-3}$. The state and adjoint variable are approximated by $P1$ finite elements on the mentioned grid using the Fenics toolbox. Moreover we set $\eps=0.1$, $\beta=0$ and
\[
M=\left(
    \begin{array}{cc}
      \sin(\pi x)+1.1 & 0 \\
      0 & \sin(\pi y)+1.1\\
    \end{array}
  \right).
\]
The case $\beta=0$ is not considered in the theoretic part of this work. However this case is very important from a practical point of view. Moreover the proposed method also works in this case. The exact activation instants are given by $u^\dagger=(0,0.1,0.2)^\top$. Then observed data $z$ is generated by solving the state equation for $T$ for $u^\dagger$, restricting $T$ to $\Gamma_N$ and adding noise $\eta$. The used perturbance has the form
\[
\eta = \delta \|S(u^\dagger)\|_{L^2(\Gamma_N)}\frac{\hat \eta|_{\Gamma_N}}{\|\hat \eta\|_{L^2(\Gamma_N)}},
\]
where $\delta \geq 1$ and  $\hat \eta$ is a FEM-function with random coefficients on $\bar\Omega$. The random coefficients are chosen from a standard normal distribution. Thus $\delta$ is the relative noise level. In this example we choose $\delta=0.1$ and $\delta=10^{-9}$.\\
In every step of the Gauss-Newton iteration the matrix $\mathcal H(u)$ is calculated by solving the linearized state equation and the adjoint equation for all unit vectors resulting in a $3\times3$ matrix. Thus 6 linear PDEs must be solved. Moreover the gradient of $J$ has to be calculated by solving the nonlinear state equation and the adjoint state equation. So in complete 8 PDEs has to be solved per iteration. The nonlinear state equation is solved by the Newton method. Since $\beta =0$ the method can be interpreted as a semi smooth Newton method. The iteration is stopped by the discrepancy criterium
\[
\|S(u_{K_\delta})-z\|_{L^2(\Gamma_N)}\leq \tau \delta \leq \|S(u_{k})-z\|_{L^2(\Gamma_N)}\quad\forall~ 0\leq k<K_\delta
\]
with $\tau > 1$, see \cite{ClasonHuu}. In our experiments we choose $\tau=1.1$. The parameter $\alpha_k$ is set to $0.1^k$.\\

In the case $\delta = 0.1$ the discrepancy criterium is satisfied after 2 iterations with a final iterate $u_{K_\delta}=(0.0004,0.0981,0.1860)^\top$, the state error $\|S(u_{K_\delta})-z\|_{L^2(\Gamma_N)}=0.0494$ and $|u_{K_\delta}-u^\dagger|=0.0141$. In the case $\delta = 10^{-9}$ the discrepancy criterium is satisfied after 5 iterations with a final iterate $u_{K_\delta}=(2.2\cdot 10^{-11},0.1,0.2)^\top$, the state error $\|S(u_{K_\delta})-z\|_{L^2(\Gamma_N)}=4.8\cdot 10^{-10}$ and $|u_{K_\delta}-u^\dagger|=1.1\cdot 10^{-10}$. So we can observe that the activation instants are reconstructed very well relative to the noise level.
\subsection{Finding the activation instants and activation regions}
In this section we consider a similar scenario as before. But in addition to the activation instants  we also reconstruct the position of the activation regions $\omega_i$ by determining  the midpoints of $\omega_i$. For this purpose we use the shape optimization approach introduced in \cite{kunisch2019inverse} for the squared version of the Eikonal equation. Here we only modify the formulas developed in that work to fit our state equation. The shape derivative of $J$ with respect to a smooth perturbation field $h$ with compact support on $U=\Omega\cup \bigcup_{i=1}^n\bar \omega_i$ is given by
\begin{equation}\label{shape_diff}
DJ(\Omega,\Gamma)h= \int_{\Omega}S_1\colon Dh+S_0\cdot h~\mathrm dx
\end{equation}
for any $h\in \mathcal C_c^\infty(U,\R^d)$, where $S_i$, $i=0,1$ have the form
\begin{align*}
S_1&=\mathrm {Id}_{\R^d}\left(\eps M\nabla T\cdot \nabla \phi+\left(\|\nabla T\|_M-1\right)\phi\right)\\
&-\eps(\nabla T\otimes M\nabla \phi+\nabla \phi\otimes M\nabla T)\nonumber
-\frac{\nabla T\otimes M\nabla T}{\|\nabla T\|_M}\phi,\\
S_0&=\eps M_{\nabla T}^\ast\nabla \phi+\frac{M_{\nabla T}^\ast\nabla T}{2\|\nabla T\|_M}\phi,
\end{align*}
with the outer product $v\otimes w=vw^\top$ for $v,w\in \R^d$, the inner product $G\colon N=\text{trace}(GN^\top)$ for $G,N\in \R^{d\times d}$, and
$$M_vh=\left(\sum_{k=1}^dDM_kv_k\right)h,$$
where $M_k$ stands for the k-th column of $M$.
Based on the shape derivative we calculate a perturbation field $h$ by solving a linear elasticity equation of the
form
\begin{equation}
\label{H1_representation}
\int_U \gamma Dh \colon Dv+h\cdot v~\mathrm dx=-\int_\Omega S_1 \colon Dv+S_0\cdot v~\mathrm dx, \quad\forall v\in  H^1_{0}(U,\R^d)
\end{equation}
for $\gamma>0$ and thus $h$ is a decent direction for $J$. Since we are only interested in the shift of
the midpoints $x_i$ of $\omega_i$, we average $h$ over $\omega_i$, $i=i,\ldots,N$,
in order to get a shift of the midpoints. The proposed method is of gradient type and thus we also update the activation instants based on the gradient calculated in Theorem \ref{thm:derivative}. In particular we use a projected gradient method.

In the specific example we choose the exact activation sites as $\omega_i^\dagger=B_{0.05}(x_i)$ with $x_1^\dagger=(0.5,0.8)^\top$, $x_2^\dagger=(0.2,0.3)^\top$ and $x_3^\dagger=(0.7,0.4)^\top$. We denote $X^\dagger=[x_1^\dagger,x_2^\dagger,x_3^\dagger]$.  The exact activation instants are given by $u^\dagger=(0,0.1,0.2)$.

We start the iteration at the initial points $x_1^0=(0.2,0.8)^\top$, $x_2^0=(0.2,0.2)^\top$ and $x_3^0=(0.8,0.2)^\top$ and initial times $u^0=(0,0,0)$. Relative noise levels are chosen to be $\delta=0.1, 0.01, 0.001$ and the iteration is stopped using the discrepancy criterium.

%
%

In Table \ref{tab:error} and Figure \ref{fig:error} we summarize our finding for the three noise levels $\delta$. In particular we document the number of iterations $K_\delta$ at which the discrepancy criterion is reached, the state error $\|S(X_{K_\delta},u_{K_\delta})-z\|_{L^2(\Gamma_N)}$, the distance between reconstructed and exact positions denoted  by $d_{K_\delta}$ and the reconstruction error $|u_{K_\delta}-u^\dagger|$.
The reconstructed position of the three midpoints as well as the activation instants $u$ are given for the respective noise levels by
\begin{align*}
X_{K_{.1}}&= [(0.396,0.809),(0.22,0.275),(0.72, 0.352)],\\
X_{K_{.01}}&= [(0.496,0.821),(0.195,0.296),(0.711, 0.407)],\\
X_{K_{.001}}&= [(0.499,0.803),(0.201,0.301),(0.7, 0.408)]
\end{align*}
as well as
\begin{align*}
u_{.1}&=(0.038,0.113,0.171),\\
u_{.01}&=(0.011,0.103,0.193),\\
u_{.001}&=(0.003,0.099,0.196).
\end{align*}

We conclude that the positions can be reconstructed with good quality relative to the noise level. Further tests in the noise free case showed that there is limit until which the state error can be reduced. This is caused by discretization effects.
\begin{table}[ht]
\centering
\begin{tabular}{|c|c|c|c|c|c|c|}
  \hline
   $\delta$ & $K_\delta$ & $\|S(X_{K_\delta},u_{K_\delta})-z\|_{L^2(\Gamma_N)}$ & $d_{K_\delta}^1$ & $d_{K_\delta}^2$ & $d_{K_\delta}^3$ & $|u_{K_\delta}-u^\dagger|$ \\\hline
  $0.1$ & $2$ & $0.108$ & $0.104$ & $0.032$ & $0.052$ & $0.049$ \\\hline
  $0.01$ & $9$ & $0.0104$ & $0.021$ & $0.006$ & $0.013$ & $0.013$  \\\hline
  $0.001$ & $52$ & $0.001$ & $0.003$ & $0.001$ & $0.008$ & $0.005$ \\
  \hline
\end{tabular}
\caption{Iterations and reconstruction errors for different $\delta$}\label{tab:error}
\end{table}
\begin{figure}[ht]
  \centering
%
%
\definecolor{mycolor1}{rgb}{0.00000,0.44700,0.74100}%
\begin{tikzpicture}

\begin{axis}[%
width=4.8cm,
height=4.8cm,
at={(0.758333in,0.48125in)},
scale only axis,
xmin=0,
xmax=60,
ymode=log,
ymin=0.001,
ymax=1,
yminorticks=true,
legend style={legend cell align=left,align=left,draw=white!15!black}
]
\addplot [color=mycolor1,solid]
  table[row sep=crcr]{%
0	0.36415441042518\\
1	0.170509201001369\\
2	0.089608650182219\\
3	0.0500888817841981\\
4	0.0290104540219312\\
5	0.0186254795820508\\
6	0.0134019779354252\\
7	0.010952812833018\\
8	0.009288159881528\\
9	0.00840528395030488\\
10	0.00749475083783855\\
11	0.00679805946170466\\
12	0.00606519523230689\\
13	0.0055241538112366\\
14	0.00498194587933552\\
15	0.00463227972486803\\
16	0.00430046996976196\\
17	0.0038534504746961\\
18	0.00358314193714274\\
19	0.00336722768778781\\
20	0.00310473498315884\\
21	0.00294229397105261\\
22	0.00268409448388414\\
23	0.00259145891954592\\
24	0.00241840271110609\\
25	0.00227235898475382\\
26	0.00215869395067012\\
27	0.00208330128912832\\
28	0.00194759722599721\\
29	0.00185445886006101\\
30	0.00178556460126568\\
31	0.00172488082764639\\
32	0.00168574800528947\\
33	0.00166947218535398\\
34	0.00157805943479836\\
35	0.00151684690682275\\
36	0.00149966449102907\\
37	0.0014274017907147\\
38	0.00140251616964861\\
39	0.00136824924222294\\
40	0.00126489664184231\\
41	0.00126053782791266\\
42	0.00124205810743655\\
43	0.00120800468903231\\
44	0.00120287100066465\\
45	0.00118630951769809\\
46	0.00117539827523405\\
47	0.00123974406806226\\
48	0.00118459888627402\\
49	0.00115881520826802\\
50	0.0011464633984126\\
51	0.00110110560490433\\
52	0.00108777303384046\\
};
\addlegendentry{\tiny $\|S(X_{k},u_{k})-z\|_{L^2(\Gamma_N)}$};

\end{axis}
\end{tikzpicture}%
%
%
\begin{tikzpicture}

\begin{axis}[%
width=4.8cm,
height=4.8cm,
at={(0.758333in,0.48125in)},
scale only axis,
xmin=0,
xmax=1,
ymin=0,
ymax=1
]
\addplot [color=blue,only marks,mark=o,mark options={solid}]
  table[row sep=crcr]{%
0.2	0.8\\
};
\addlegendentry{\tiny $X_0$};

\addplot [color=blue,only marks,mark=asterisk,mark options={solid}]
  table[row sep=crcr]{%
0.5	0.8\\
};
\addlegendentry{\tiny $X^\dagger$};

\addplot [color=blue,solid,forget plot]
  table[row sep=crcr]{%
0.2	0.8\\
0.304457968771179	0.793918239742577\\
0.395034746687251	0.805756528330713\\
0.445298071987369	0.817438949873219\\
0.470409844546148	0.823231244010901\\
0.48300500354728	0.82508136054528\\
0.489767050281563	0.82481924843119\\
0.493159698398063	0.823767890353605\\
0.4948574521235	0.822153035541733\\
0.495916392030569	0.820297425154796\\
0.496819538295511	0.81865955845405\\
0.497434164750856	0.817114392099874\\
0.49741977639144	0.81579417278774\\
0.497679058891005	0.81442841811495\\
0.498170019436624	0.813177465386813\\
0.498224574788414	0.812367616131618\\
0.498335799135867	0.811498054414175\\
0.498385541037913	0.810475105231084\\
0.498299871634374	0.809762208663314\\
0.498407064674239	0.809091603963219\\
0.498581794101938	0.80866631238529\\
0.498710627170603	0.808188492482103\\
0.498808826730461	0.80765811671036\\
0.498835705535152	0.807164665854676\\
0.498734567257067	0.806639931771363\\
0.498710763399174	0.806306173110253\\
0.498743097508981	0.805941261575714\\
0.498822096579202	0.805619956880299\\
0.498940268193466	0.805474228873687\\
0.499139838143714	0.805468649518772\\
0.499121212400464	0.805316775794605\\
0.499191881407663	0.805172550788314\\
0.499176708914904	0.805088774523128\\
0.499221796347418	0.805053969820246\\
0.499168392650042	0.804987494100951\\
0.499181251677257	0.804757125783601\\
0.499236124329713	0.804512864363276\\
0.499210425400748	0.804332024031315\\
0.499329245771969	0.804084138309656\\
0.499190217304011	0.803882823310183\\
0.499295436161727	0.803684115495038\\
0.499249577014276	0.803639564040186\\
0.499193075493572	0.803564548757898\\
0.499225986830156	0.803401720013719\\
0.499139734895822	0.803368510246516\\
0.499167714731614	0.80321924414974\\
0.49915786655186	0.803118662221914\\
0.499201188767265	0.803050087069374\\
0.499097014767606	0.803143055579572\\
0.499139001815281	0.803026952158125\\
0.499137865713366	0.802995399194329\\
0.499116477004867	0.802949249171563\\
0.49912937044895	0.802899988755844\\
};
\addplot [color=blue,solid,forget plot]
  table[row sep=crcr]{%
0.2	0.2\\
0.217971841608386	0.249770558534253\\
0.218654855286902	0.275566032096932\\
0.211211544649339	0.283108873134198\\
0.204086433189642	0.286880496421208\\
0.199623190447715	0.289992285149245\\
0.197055394665656	0.292120275439662\\
0.195773574627805	0.294029471220632\\
0.19517240671471	0.295289876713476\\
0.195118807261723	0.29636517074007\\
0.19519139364129	0.296960080192948\\
0.195526412113408	0.297601071276241\\
0.195800776956265	0.298097188340934\\
0.196333291825935	0.298512661094826\\
0.19691911949485	0.29886327272142\\
0.197283775908489	0.299198204507715\\
0.197604113303148	0.299323912206098\\
0.197912283214199	0.299491769486089\\
0.198237283952285	0.299732202438193\\
0.198532422982329	0.29984126399983\\
0.198653154059324	0.29998309482611\\
0.198826665448983	0.300102774038623\\
0.198934248001947	0.299981687456039\\
0.199133286923931	0.300184049237167\\
0.199297390557614	0.300181772209721\\
0.19949762907789	0.300254509503975\\
0.199762505764583	0.300318895953554\\
0.199892802115663	0.300264064329724\\
0.200053670692847	0.300331714446205\\
0.200198638236499	0.300450882189737\\
0.200280319621944	0.300523422867205\\
0.200348044019895	0.300617214285829\\
0.200413315012134	0.300712470485226\\
0.200485643644741	0.3008266916748\\
0.200492474669761	0.300900547598317\\
0.200543670356687	0.300941737732742\\
0.200594784082784	0.300987878556464\\
0.200621911089784	0.301010211430396\\
0.200687825688702	0.301028943596625\\
0.200702015846634	0.301096644384201\\
0.200744049861815	0.301109163133709\\
0.200762456964001	0.301122669895761\\
0.200778509817551	0.301130745890363\\
0.200778136926928	0.301049215901635\\
0.200801910773048	0.301084856434765\\
0.200837399726451	0.301060834794226\\
0.200853446424223	0.301038598149802\\
0.2008751532232	0.301009150093108\\
0.200866638105012	0.301029128577576\\
0.200885555287411	0.301003876011471\\
0.200880882866327	0.300991428442594\\
0.200869483664149	0.300973781407118\\
0.200864499771551	0.300948138424679\\
};
\addplot [color=blue,solid,forget plot]
  table[row sep=crcr]{%
0.8	0.2\\
0.739032001285941	0.290797423642729\\
0.719754827345687	0.352550077630236\\
0.718139837099816	0.378979579623271\\
0.717042092798461	0.391639989653093\\
0.715443267136817	0.39856360033036\\
0.714261804763767	0.402255630775061\\
0.713002672315375	0.404596893979977\\
0.711797008272157	0.406007888098887\\
0.710862561784015	0.406830712904403\\
0.710123063922589	0.407356338622434\\
0.709302473251861	0.40783499403802\\
0.708384234689597	0.408219276088342\\
0.707760739107248	0.408413582232397\\
0.707267761996959	0.408564539736375\\
0.706635970838901	0.408777396898057\\
0.706068410143812	0.408908033211486\\
0.705501177699951	0.408992792470201\\
0.704996770128583	0.409056510622489\\
0.704570520236615	0.409052685142117\\
0.704175920704465	0.40911861501286\\
0.703764467475734	0.409184287572123\\
0.70346665959078	0.409104231802968\\
0.703130038165351	0.409124551270177\\
0.702759011883453	0.409075011887793\\
0.702424331265073	0.409072561208287\\
0.702094218427468	0.409070138347419\\
0.701815445937816	0.4090162517811\\
0.701536083158009	0.409039806493325\\
0.701342062784772	0.40905801063046\\
0.701122521695498	0.409010759635623\\
0.700879454404218	0.409022251290532\\
0.700653313744449	0.409007865263542\\
0.700472501111975	0.4090005296579\\
0.700280786937053	0.408974826965675\\
0.700081198087555	0.408925925580347\\
0.699926932307058	0.408871740571522\\
0.69985911523054	0.408752920126408\\
0.699748276740156	0.408689116110473\\
0.699534836267588	0.408667579975995\\
0.699344626553708	0.408668425449689\\
0.699210241146786	0.408613044391505\\
0.699062057909929	0.408559101770028\\
0.698932946441648	0.408478403377791\\
0.698803425093481	0.408430594625881\\
0.698673630824689	0.408367257929087\\
0.698552523349376	0.408299078156718\\
0.69843306089719	0.408240707060451\\
0.698317155761314	0.408193031360526\\
0.698180861677109	0.408146406881574\\
0.698030532800785	0.408123443047599\\
0.697876227535258	0.408097391722684\\
0.697776511873067	0.408042327466369\\
};
\addplot [color=blue,only marks,mark=o,mark options={solid},forget plot]
  table[row sep=crcr]{%
0.2	0.2\\
};

\addplot [color=blue,only marks,mark=o,mark options={solid},forget plot]
  table[row sep=crcr]{%
0.8	0.2\\
};

\addplot [color=blue,only marks,mark=asterisk,mark options={solid},forget plot]
  table[row sep=crcr]{%
0.2	0.3\\
};

\addplot [color=blue,only marks,mark=asterisk,mark options={solid},forget plot]
  table[row sep=crcr]{%
0.7	0.4\\
};

\node[] at (axis cs: .45,.9) {\tiny $1$};
\node[] at (axis cs: .3,.3) {\tiny $2$};
\node[] at (axis cs: .8,.35) {\tiny $3$};
\end{axis}
\end{tikzpicture}
%
%
\definecolor{mycolor1}{rgb}{0.00000,0.44700,0.74100}%
\definecolor{mycolor2}{rgb}{0.85000,0.32500,0.09800}%
\definecolor{mycolor3}{rgb}{0.92900,0.69400,0.12500}%
\begin{tikzpicture}

\begin{axis}[%
width=4.8cm,
height=4.8cm,
at={(0.758333in,0.48125in)},
scale only axis,
xmin=0,
xmax=60,
ymode=log,
ymin=0.0001,
ymax=1,
yminorticks=true,
legend style={legend cell align=left,align=left,draw=white!15!black}
]
\addplot [color=mycolor1,solid]
  table[row sep=crcr]{%
0	0.670820393249937\\
1	0.195636586008145\\
2	0.10512298521937\\
3	0.0574144398298867\\
4	0.0376200478213036\\
5	0.0302969396347272\\
6	0.0268460118570554\\
7	0.0247326168827012\\
8	0.0227420927439708\\
9	0.0207041377980957\\
10	0.0189286676328214\\
11	0.0173056617173106\\
12	0.016003544854786\\
13	0.0146138980744783\\
14	0.013303925093106\\
15	0.0124944012846147\\
16	0.0116178664059938\\
17	0.0105987880129077\\
18	0.0099091449906484\\
19	0.00923009780967043\\
20	0.00878158746063179\\
21	0.00828938426076616\\
22	0.00775020292041204\\
23	0.00725865128037163\\
24	0.00675943887872626\\
25	0.00643661015670432\\
26	0.00607275826811678\\
27	0.00574207034136215\\
28	0.00557585986761596\\
29	0.00553588348668688\\
30	0.00538891199549538\\
31	0.00523529725201574\\
32	0.00515494271142117\\
33	0.00511353223015627\\
34	0.00505634929484894\\
35	0.00482706893849677\\
36	0.00457705700215551\\
37	0.00440339190348458\\
38	0.004138852131548\\
39	0.00396636670932315\\
40	0.00375088218729662\\
41	0.00371612177681467\\
42	0.00365474144181299\\
43	0.00348866671363572\\
44	0.00347662441606102\\
45	0.00332509122635855\\
46	0.00323036264200623\\
47	0.00315295583798847\\
48	0.00327019582077304\\
49	0.00314702355276614\\
50	0.00311700045902727\\
51	0.00307874707614859\\
52	0.00302785907848815\\
};
\addlegendentry{\tiny $d_k^1$};

\addplot [color=mycolor2,solid]
  table[row sep=crcr]{%
0	0.5\\
1	0.0533477635965915\\
2	0.0307412168474741\\
3	0.0202733544392812\\
4	0.0137411902820445\\
5	0.0100148061375506\\
6	0.00841194149538893\\
7	0.00731504513533571\\
8	0.00674469556780935\\
9	0.00608588746996243\\
10	0.00568891971698767\\
11	0.00507620405428437\\
12	0.00461022411395732\\
13	0.00395688335089664\\
14	0.00328389609958705\\
15	0.0028320927468487\\
16	0.00248945129882362\\
17	0.00214868788635898\\
18	0.00178294234314564\\
19	0.00147613665375022\\
20	0.0013469520306309\\
21	0.00117782701260598\\
22	0.00106590931632105\\
23	0.000886039320758666\\
24	0.000725741803093014\\
25	0.00056316217113294\\
26	0.000397615569425516\\
27	0.000284993608066944\\
28	0.0003360282980519\\
29	0.000492698587395416\\
30	0.000593759705908023\\
31	0.0007085817626895\\
32	0.000823676812574598\\
33	0.000958785103587064\\
34	0.0010264098972591\\
35	0.0010874039792136\\
36	0.00115311410860108\\
37	0.0011862969854547\\
38	0.00123767083793451\\
39	0.00130209644586099\\
40	0.00133560961888047\\
41	0.00135710291312162\\
42	0.00137283058117017\\
43	0.00130627374056596\\
44	0.00134906425791455\\
45	0.00135152090716375\\
46	0.00134426817108466\\
47	0.00133576834612028\\
48	0.00134542455538816\\
49	0.00133864678443147\\
50	0.00132622961095444\\
51	0.0013054700575157\\
52	0.00128309248589674\\
};
\addlegendentry{\tiny $d_k^2$};

\addplot [color=mycolor3,solid]
  table[row sep=crcr]{%
0	0.5\\
1	0.115968529383843\\
2	0.0513979409738811\\
3	0.0277652978161995\\
4	0.0189821679465686\\
5	0.0155099240446247\\
6	0.0144390770242846\\
7	0.0137913350189225\\
8	0.0132387357244592\\
9	0.0128317530873189\\
10	0.0125136781606665\\
11	0.0121623657311151\\
12	0.0117410344836661\\
13	0.0114462848764161\\
14	0.0112326179201639\\
15	0.0110035814751723\\
16	0.0107786203834471\\
17	0.0105419766883894\\
18	0.0103435050332635\\
19	0.0101410435121644\\
20	0.0100293296626813\\
21	0.00992584271401291\\
22	0.00974190769000222\\
23	0.00964647991749535\\
24	0.0094851456149406\\
25	0.00939088648429546\\
26	0.00930876793476313\\
27	0.00919720827932651\\
28	0.0091693867245898\\
29	0.00915689298287332\\
30	0.00908040991188413\\
31	0.00906501287360388\\
32	0.00903152564381099\\
33	0.00901292357803828\\
34	0.00897921824925879\\
35	0.00892629489739799\\
36	0.00887204145933362\\
37	0.00875405387563669\\
38	0.00869276155090155\\
39	0.00868005299167162\\
40	0.00869316479372833\\
41	0.0086491764195367\\
42	0.00861034020663645\\
43	0.00854528683737034\\
44	0.00851508762449635\\
45	0.00847173302465867\\
46	0.0084243627002528\\
47	0.00838835806389359\\
48	0.00836407362510871\\
49	0.00834704794031762\\
50	0.00835877550819265\\
51	0.00837127004656928\\
52	0.00834403562593848\\
};
\addlegendentry{\tiny $d_k^3$};

\end{axis}
\end{tikzpicture}%
%
%
\definecolor{mycolor1}{rgb}{0.00000,0.44700,0.74100}%
\begin{tikzpicture}

\begin{axis}[%
width=4.8cm,
height=4.8cm,
at={(0.758333in,0.48125in)},
scale only axis,
xmin=0,
xmax=60,
ymode=log,
ymin=0.001,
ymax=1,
yminorticks=true,
legend style={legend cell align=left,align=left,draw=white!15!black}
]
\addplot [color=mycolor1,solid]
  table[row sep=crcr]{%
0	0.223606797749979\\
1	0.077923267277587\\
2	0.0505578160877107\\
3	0.037857769833113\\
4	0.0284215667342058\\
5	0.0225026782518495\\
6	0.0187209275367033\\
7	0.0161706143789638\\
8	0.014343325296741\\
9	0.0131990717068322\\
10	0.0121048090924291\\
11	0.0113299280242798\\
12	0.0105380103383962\\
13	0.0100359047662585\\
14	0.0096639908646021\\
15	0.00910934316870346\\
16	0.0086614924442655\\
17	0.00842209365574438\\
18	0.00810878102030136\\
19	0.00786473340551233\\
20	0.00751485713329083\\
21	0.00725503632278931\\
22	0.0070889562065248\\
23	0.00693349974243082\\
24	0.00684394630693235\\
25	0.00669265467487973\\
26	0.00659657749296741\\
27	0.00652413087652875\\
28	0.00637264437182932\\
29	0.00619916166283071\\
30	0.00609597934548207\\
31	0.00598205377745128\\
32	0.00586686099017582\\
33	0.00573937407488384\\
34	0.0056226860707128\\
35	0.00558847628979708\\
36	0.00556088350756066\\
37	0.00554484024918701\\
38	0.00554079179029266\\
39	0.00549181522127042\\
40	0.00546818835629283\\
41	0.00543214058365236\\
42	0.00540498747337386\\
43	0.00543268914199043\\
44	0.00539222480613948\\
45	0.00540840099591279\\
46	0.00541073986734382\\
47	0.00541163937165576\\
48	0.00534701222599367\\
49	0.00535066836623385\\
50	0.00532262579410956\\
51	0.00530113708004671\\
52	0.00529757270212716\\
};
\addlegendentry{\tiny $|u_k-u^\dagger|$};

\end{axis}
\end{tikzpicture}%
  \caption{
  top left: State error during the iteration; top right: Paths of the midpoints in $U$ during the iteration; bottom left: Distances between $X_k$ and $X^\dagger$ during the iteration; bottom right: Error of the activation instants during the iteration
  }\label{fig:error}
\end{figure}
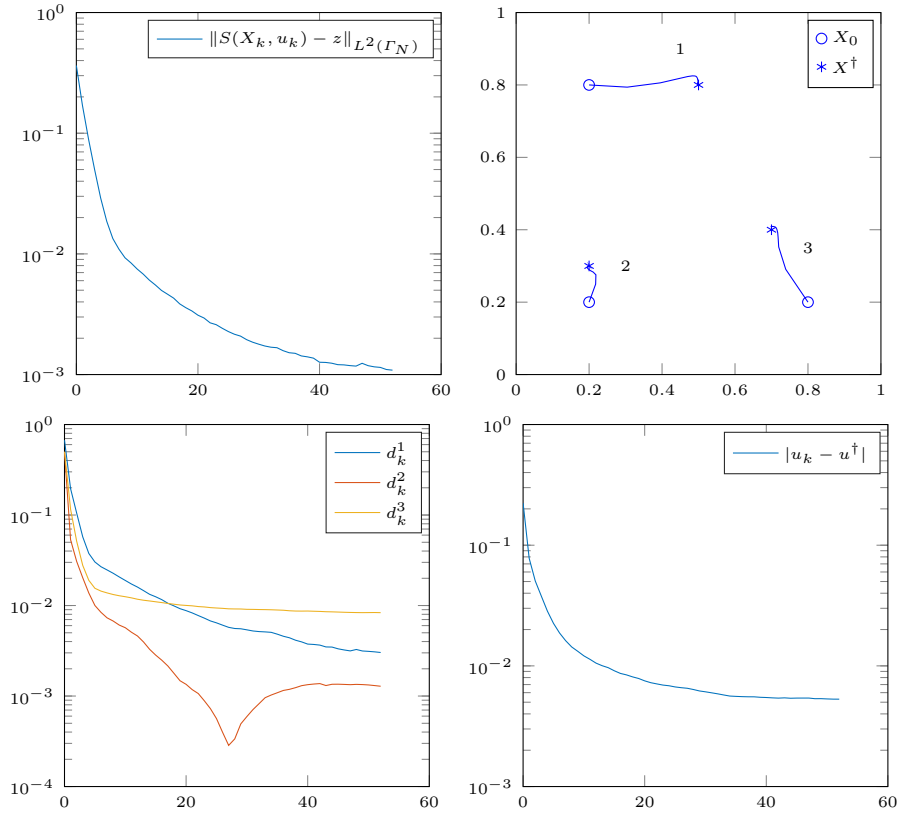

\bibliographystyle{spmpsci}
\bibliography{lit}

\begin{thebibliography}{10}
\providecommand{\url}[1]{{#1}}
\providecommand{\urlprefix}{URL }
\expandafter\ifx\csname urlstyle\endcsname\relax
  \providecommand{\doi}[1]{DOI~\discretionary{}{}{}#1}\else
  \providecommand{\doi}{DOI~\discretionary{}{}{}\begingroup
  \urlstyle{rm}\Url}\fi

\bibitem{arrv09}
Alessandrini, G., Rondi, L., Rosset, E., Vessella, S.: The stability for the
  {C}auchy problem for elliptic equations.
\newblock Inverse Problems \textbf{25}(12), 123004, 47 (2009).
\newblock \doi{10.1088/0266-5611/25/12/123004}.
\newblock \urlprefix\url{https://doi.org/10.1088/0266-5611/25/12/123004}

\bibitem{ClasonHuu}
Clason, C., Nhu, V.H.: Bouligand---landweber iteration for a non-smooth
  ill-posed problem.
\newblock Numer. Math. \textbf{142}(4), 789–832 (2019).
\newblock \doi{10.1007/s00211-019-01038-6}.
\newblock \urlprefix\url{https://doi.org/10.1007/s00211-019-01038-6}

\bibitem{colli90}
Colli~Franzone, P., Guerri, L., Rovida, S.: Wavefront propagation in an
  activation model of the anisotropic cardiac tissue: asymptotic analysis and
  numerical simulations.
\newblock J. Math. Biol. \textbf{28}(2), 121--176 (1990).
\newblock \doi{10.1007/BF00163143}.
\newblock \urlprefix\url{https://doi.org/10.1007/BF00163143}

\bibitem{demoulin72:_histo}
Demoulin, J.C., Kulbertus, H.E.: Histopathological examination of concept of
  left hemiblock.
\newblock British heart journal \textbf{34}, 807--814 (1972)

\bibitem{durrer70:_excitation_human}
Durrer, D., van Dam, R.T., Freud, G.E., Janse, M.J., Meijler, F.L., Arzbaecher,
  R.C.: Total excitation of the isolated human heart.
\newblock Circulation \textbf{41}, 899--912 (1970)

\bibitem{evans1998partial}
Evans, L.: Partial Differential Equations.
\newblock Graduate studies in mathematics. American Mathematical Society
  (1998).
\newblock \urlprefix\url{https://books.google.de/books?id=5Pv4LVB\_m8AC}

\bibitem{haissaguerre16:_His_purkinje}
Haissaguerre, M., Vigmond, E., Stuyvers, B., Hocini, M., Bernus, O.:
  Ventricular arrhythmias and the his-purkinje system.
\newblock Nature reviews. Cardiology \textbf{13}, 155--166 (2016).
\newblock \doi{10.1038/nrcardio.2015.193}

\bibitem{kunisch2019inverse}
Kunisch, K., Neic, A., Plank, G., Trautmann, P.: Inverse localization of
  earliest cardiac activation sites from activation maps based on the viscous
  eikonal equation.
\newblock Journal of mathematical biology \textbf{79}(6-7), 2033--2068 (2019)

\bibitem{ono09:_morphological}
Ono, N., Yamaguchi, T., Ishikawa, H., Arakawa, M., Takahashi, N., Saikawa, T.,
  Shimada, T.: Morphological varieties of the purkinje fiber network in
  mammalian hearts, as revealed by light and electron microscopy.
\newblock Archives of histology and cytology \textbf{72}, 139--149 (2009)

\bibitem{rosenbaum69:_trifascicular}
Rosenbaum, M.B., Elizari, M.V., Lazzari, J.O., Nau, G.J., Levi, R.J., Halpern,
  M.S.: Intraventricular trifascicular blocks. the syndrome of right bundle
  branch block with intermittent left anterior and posterior hemiblock.
\newblock American heart journal \textbf{78}, 306--317 (1969)

\bibitem{Salo14}
Salo, M.: Unique continuation for elliptic equations.
\newblock University of Jyv\"askyl\"a (2014)

\bibitem{Troianiello87}
Troianiello, G.M.: Elliptic differential equations and obstacle problems.
\newblock The University Series in Mathematics. Plenum Press, New York (1987).
\newblock \doi{10.1007/978-1-4899-3614-1}.
\newblock \urlprefix\url{http://dx.doi.org/10.1007/978-1-4899-3614-1}

\end{thebibliography}

\end{document}